\newtheorem{theorem}{Theorem}[section]
\newtheorem{lemma}[theorem]{Lemma}
\newtheorem{corollary}[theorem]{Corollary}
\begin{document}

\title[Uniform convergence and the free central limit theorem.]
{Uniform convergence and the free central limit theorem}

\author[John Williams]{John Williams}

\address{Department of Mathematics, Indiana University, Bloomington, IN 47405}
\email{jw32@indiana.edu}

\maketitle

\begin{abstract}
We prove results about uniform convergence of densities in the free
central limit theorem without assumptions of boundedness on the
support.
\end{abstract}


\newcommand{\pn}{{$\sqrt{n}\varphi_{\mu}(\sqrt{n}z)$}}
\newcommand{\pp}{{$\varphi_{\mu_{n}}(z)$}}
\newcommand{\CC}{\mathbb{C}}
\newcommand{\RR}{\mathbb{R}}

\section{Introduction}
Consider free, identically distributed random variables $X_{1},
X_{2}, \ldots $ such that $E(X_{j}) = 0$ and $E(X_{j}^{2}) = 1$. The
free central limit theorem tells us that the distribution $\mu_{n}$
of the random variable $n^{-1/2} (X_{1} + X_{2} + \cdots + X_{n})$
converges weakly to the semicircle law $\gamma$, given by
$$d\gamma (t) = (2 \pi)^{-1} \sqrt {4 - t^{2}} \chi _{(-2,2)} (t) dt.$$
It was shown in \cite{BV2} that, when the variables $X_{j}$ are
bounded, $\mu_{n}$ is absolutely continuous for big enough $n$, and
its density converges to $(2 \pi)^{-1} \sqrt{4 - x^{2}}$ uniformly.

It is our purpose to extend this result to unbounded random
variables.  In particular, we show that $d\mu_{n} / dt$ converges to
$(2 \pi)^{-1} \sqrt{4 - x^{2}}$ uniformly on compact subsets of
$(-2,2)$.  When $\mu$ is infinitely divisible, we show that this
convergence is uniform on $\RR$.  The main results have recently
been superseded in \cite{Wg} with a proof making stong use of free
brownian motion.

\section{Preliminaries.}
A noncommutative probability space is defined to be a pair
$(A,\varphi)$, where $A$ is a $W^{*}$-algebra and $\varphi$ is a
tracial state on $A$. A bounded random variable is an element $a\in
A$. A family of unital subalgebras $\{A_{i}\}_{i \in I} \subseteq A$
is said to be freely independent if $\varphi(a_{1}a_{2} \cdots
a_{n}) = 0$ whenever $a_{j} \in A_{i_{j}}$ with $i_{j} \neq i_{j+1}
$ for $j = 1 , 2, \ldots , n-1$ and $\varphi(a_{k}) = 0$ for $k =
1,2, \ldots ,n$.

Given a random variable $x\in A$, its distribution is the functional
$\mu_{x} : \CC[X] \rightarrow \CC$ defined by the property that
$\mu_{x} (P(X)) = \varphi (P(a))$  for $P \in \CC[X]$.  If $x =
x^{*}$, then $\mu_{x}$ is given by integration against a compactly
supported probability measure on the real line. More specifically,
let $E_{x}$ denote the spectral measure of $x$, and define a Borel
measure $\mu$ on $\RR$ by setting $\mu (\sigma) = \varphi (E_{x}
(\sigma))$.  Then we have $\varphi(P(x)) = \int_{\sigma (x)} p(t) d
\mu (t)$ where $\sigma(x)$ denotes the spectrum of our random
variable

Now, consider self-adjoint, freely independent random variables $x,y
\in A$ and their corresponding probability distributions, $\mu_{x}$
and $\mu_{y}$.  It was established in \cite{V2} that $\mu_{x+y}$
depends only on $\mu_{x}$ and $\mu_{y}$. As such, we can define the
additive free convolution operation $\boxplus$ on the space $\Sigma$
of all linear functionals on $\CC [X]$ via the formula $\mu_{x}
\boxplus \mu_{y} = \mu_{x+y}$. Finding $\mu_{x} \boxplus \mu_{y}$
given $\mu_{x}$ and $\mu_{y}$ is very nontrivial in practice and we
introduce some of the analytic tools that come to bear on this
problem.

Given a probability measure $\mu$ defined on $\RR$, we define the
Cauchy transform to be the analytic function, $G_{\mu} : \CC^{+}
\rightarrow \CC^{-}$ where $$G_{\mu} (z) = \int_{-\infty}^{\infty}
\frac{d\mu (t)}{z-t} .$$  For positive real numbers $\alpha$ and
$\beta$, let us set

$$\Gamma_{\alpha,\beta} := \{z\in \CC^{+}:\Im{(z)}>\alpha|\Re{(z)}|,
|z| > \beta.\}.$$

Note that for fixed $\alpha > 0$, there exists a $\beta >0$ so that
$G_{\mu} (z)$ maps $\Gamma_{\alpha, \beta}$ injectively onto a
region in $\mathbb{C}^{-}$ containing $z^{-1}$ for all $z \in
\Gamma_{\alpha ', \beta '}$ with $\alpha > \alpha '$ and $\beta >
\beta '$. We define the $R$-transform of $ \mu $ by the formula
$R_{\mu} (z) = G_{\mu}^{-1} (z) - z^{-1}$.  This function may be
defined for all $z \in \{z \in \mathbb{C}^{-}| -\Im{(z)} \geq \gamma
|\Re{(z)}| , 0 < \Im{(z)} < \lambda \}$ where $\gamma, \lambda \in
\mathbb{R}^{+}$ depend on our measure $\mu$, and maps into a set $\Gamma_{\alpha, \beta}$ as was previously defined. The $R$-transform
satisfies the property that $R_{\mu_{x} \boxplus \mu_{y}} (z) =
R_{\mu_{x}} (z) + R_{\mu_{y}} (z)$ on an appropriate angle, as above, where all are defined \cite{V2}.
A fundamental technique in finding $\mu_{x} \boxplus \mu_{y}$ when
given $\mu_{x}$ and $\mu_{y}$ for $x$ and $y$ freely independent
random variables is to compute the $R$-transform of each and then
find the probability measure whose $R$-transform corresponds to
their sum.

It turns out that it is advantageous to consider the reciprocal of
the Cauchy transform, $F_{\mu}(z) = G_{\mu}^{-1} (z)$.  We define
the function that corresponds to the $R$-transform in this setting,
$\varphi_{\mu} (z) := F_{\mu}^{-1} (z) - z$. We also define
$\mathbb{C}^{+}_{a} = \{z \in \mathbb{C}^{+} : \Im {(z)} > a \}$
where $a \in \mathbb{R}^{+}$.  The following properties were
established in \cite{Ma} and will be used throughout this paper.

\begin{lemma} \label{MAS1}
For any probability measure $\mu$ defined on $\RR$ having finite
variance, $|F_{\mu}(z) - z| \leq C / \Im (z)$ for $ z \in \CC ^{+}$
where $C>0$ depends only on $\mu$.
\end{lemma}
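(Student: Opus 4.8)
The plan is to represent $F_\mu(z)-z$ as (minus) the Cauchy transform of a finite positive measure whose total mass is exactly the variance of $\mu$, which immediately yields the bound with $C$ equal to that variance. First note that the stated inequality forces $\int t\,d\mu(t)=0$: the moment expansion of $G_\mu$ gives $F_\mu(iy)-iy\to-\int t\,d\mu(t)$ as $y\to\infty$, so a bound of the form $C/\Im(z)$ (which tends to $0$) can hold only for centered $\mu$. This is exactly the situation of the free central limit theorem ($E(X_j)=0$), so I assume $\int t\,d\mu(t)=0$ throughout. The first step is to show that $P(z):=F_\mu(z)-z$ is a Pick function, i.e.\ maps $\CC^{+}$ into $\overline{\CC^{+}}$. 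Since $\Im F_\mu(z)=-\Im G_\mu(z)/|G_\mu(z)|^2$ and $\Im G_\mu(z)=-\Im(z)\int|z-t|^{-2}\,d\mu(t)$, this reduces to the inequality $|G_\mu(z)|^2\le\int|z-t|^{-2}\,d\mu(t)$, which is Cauchy--Schwarz applied to $\int(z-t)^{-1}\,d\mu(t)$ against the probability measure $\mu$. Hence $\Im F_\mu(z)\ge\Im(z)$ and $P$ is Pick.

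Being a Pick function, $P$ admits a Nevanlinna representation $P(z)=\alpha+\beta z+\int_{\RR}\frac{1+tz}{t-z}\,d\tau(t)$ with $\alpha\in\RR$, $\beta\ge 0$, and $\tau$ a finite positive measure. I would pin down the three pieces of data from the behaviour of $P$ along the imaginary axis, where the finite-variance hypothesis enters. Writing $z=iy$ and using $zG_\mu(z)=1+z^{-1}\int\frac{t^2}{z-t}\,d\mu(t)$ together with $\int t\,d\mu(t)=0$, dominated convergence (the relevant integrands are dominated by $t^2\in L^1(\mu)$) gives $P(iy)\to 0$ and $iy\,P(iy)\to-\sigma^2$, where $\sigma^2=\int t^2\,d\mu(t)<\infty$. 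On the other hand $y\,\Im P(iy)=\int\frac{y^2(1+t^2)}{t^2+y^2}\,d\tau(t)$ increases monotonically to $\int(1+t^2)\,d\tau(t)$, so by the monotone convergence theorem and the asymptotic $y\,\Im P(iy)=-\Re(iy\,P(iy))\to\sigma^2$ I obtain $\int(1+t^2)\,d\tau(t)=\sigma^2<\infty$. Knowing this finiteness, I may split $\frac{1+tz}{t-z}=-t+\frac{1+t^2}{t-z}$ and rewrite $P(z)=\beta z+\big(\alpha-\int t\,d\tau\big)+\int\frac{1+t^2}{t-z}\,d\tau(t)$, the last integral tending to $0$ as $\Im(z)\to\infty$; comparing with $P(iy)\to 0$ forces $\beta=0$ and $\alpha-\int t\,d\tau=0$.

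Thus $P(z)=F_\mu(z)-z=\int_{\RR}\frac{1+t^2}{t-z}\,d\tau(t)$, and since $|t-z|\ge\Im(z)$ for all real $t$,
$$|F_\mu(z)-z|\le\frac{1}{\Im(z)}\int_{\RR}(1+t^2)\,d\tau(t)=\frac{\sigma^2}{\Im(z)},$$
which is the claim with $C=\sigma^2$. The main obstacle is the middle step: extracting from the mere finiteness of the second moment of $\mu$ the finiteness of the total mass $\int(1+t^2)\,d\tau$ of the representing measure, and correctly identifying it as $\sigma^2$. This is where the hypothesis is genuinely used, and the monotone-convergence identity $y\,\Im P(iy)\uparrow\int(1+t^2)\,d\tau$ paired with the elementary asymptotics of $G_\mu$ along the imaginary axis is the cleanest route I see; it also explains why the bound is exactly $C/\Im(z)$ and why centering is required. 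An alternative, avoiding the representation theorem, would be to estimate $F_\mu(z)-z=-\big(\int\frac{t^2}{z-t}\,d\mu\big)/\big(zG_\mu(z)\big)$ directly, but controlling $|zG_\mu(z)|$ from below for small $\Im(z)$ is problematic, so the Nevanlinna route is preferable.
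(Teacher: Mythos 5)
Your argument is correct, and it is essentially the proof of the result in the cited source: the paper itself offers no proof of Lemma \ref{MAS1}, deferring to \cite{Ma}, where the statement is established exactly by showing $F_{\mu}(z)-z$ is a Pick function and identifying its Nevanlinna representation as $-\sigma^{2}G_{\rho}(z)$ for a probability measure $\rho$, which is what your computation of the asymptotics along the imaginary axis amounts to. Two small remarks. First, you are right that the lemma as stated is slightly imprecise: a bound of the form $C/\Im(z)$ forces $\int t\,d\mu(t)=0$, and the mean-zero hypothesis (implicit throughout the paper) is genuinely needed; flagging this is a point in your favor. Second, in the monotone-convergence step you write $y\,\Im P(iy)=\int\frac{y^{2}(1+t^{2})}{t^{2}+y^{2}}\,d\tau(t)$, silently dropping the term $\beta y^{2}$ from the linear part of the representation; as written this presupposes $\beta=0$, which you only establish afterwards. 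The fix is immediate: since both terms are nonnegative, $\int\frac{y^{2}(1+t^{2})}{t^{2}+y^{2}}\,d\tau(t)\leq y\,\Im P(iy)\to\sigma^{2}$ already gives $\int(1+t^{2})\,d\tau\leq\sigma^{2}<\infty$ (and $\beta=0$, since otherwise the left side of the full identity diverges), after which your splitting argument pins down $\alpha$ and yields the exact constant $C=\sigma^{2}$. With that reordering the proof is complete.
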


\begin{lemma}\label{MAS2}
For measure $\mu$ with zero mean and variance $\sigma ^{2}$,
$F_{\mu}^{-1} (z) : \CC_{2\sigma}^{+} \rightarrow \CC_{\sigma}^{+}$
is defined and satisfies $|\varphi_{\mu}(z)| = |F_{\mu}^{-1} (z) -
z| \leq 2\sigma ^{2} / \Im (z)$ for $z \in \CC_{2\sigma}^{+}$
\end{lemma}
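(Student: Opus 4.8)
The plan is to obtain the statement in two stages: first sharpen the estimate of Lemma~\ref{MAS1} so that the constant is exactly the variance, and then invert $F_\mu$ by a contraction-mapping argument whose geometry is dictated by that sharp constant. Throughout I write $H(z) = F_\mu(z) - z$, so that the assertion about $\varphi_\mu$ becomes an assertion about $H$ evaluated at $F_\mu^{-1}(z)$, and solving $F_\mu(w)=z$ is the same as finding a fixed point of the map $w \mapsto z - H(w)$.

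For the first stage I would invoke the integral representation of $F_\mu = 1/G_\mu$. Since $G_\mu$ maps $\CC^{+}$ into $\CC^{-}$, its reciprocal $F_\mu$ is a Pick function with $F_\mu(z)/z \to 1$ nontangentially, so the Nevanlinna representation theorem applies; matching the expansion $F_\mu(z) = z - \sigma^{2}/z + O(z^{-2})$ that comes from $E(X)=0$ and $E(X^{2})=\sigma^{2}$ forces the linear coefficient to be $1$ and the constant term to vanish, and the representation may be rewritten in Cauchy-transform form
$$F_\mu(z) = z + \int_{\RR} \frac{d\rho(t)}{t-z}, \qquad \rho \ge 0, \quad \rho(\RR)=\sigma^{2},$$
where the total mass is recovered from $\lim_{y\to\infty} iy\,(F_\mu(iy)-iy) = -\sigma^{2}$. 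This is the sharp form of Lemma~\ref{MAS1}, with $C=\sigma^{2}$. Placing absolute values inside the integral and using $|t-z|\ge \Im z$ then yields the two estimates I will need,
$$|H(z)| \le \frac{\sigma^{2}}{\Im z}, \qquad |H'(z)| = \left|\int_{\RR}\frac{d\rho(t)}{(t-z)^{2}}\right| \le \frac{\sigma^{2}}{(\Im z)^{2}}.$$

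For the second stage, fix $z$ with $s := \Im z > 2\sigma$ and set $\Phi(w) = z - H(w)$. Because $s > 2\sigma$, the quadratic $\rho^{2} - s\rho + \sigma^{2}$ has two positive roots $\rho_{-} < \sigma < \rho_{+}$ with $\rho_{-}\rho_{+}=\sigma^{2}$ and $\rho_{-}+\rho_{+}=s$, and I would run the fixed-point argument on the closed disk $D = \{w : |w-z|\le \rho_{-}\}$. Every $w\in D$ satisfies $\Im w \ge s-\rho_{-} = \rho_{+} > \sigma$, so $D\subset \CC^{+}_{\sigma}$; the bound on $H$ gives $|\Phi(w)-z| = |H(w)| \le \sigma^{2}/\rho_{+} = \rho_{-}$, so $\Phi$ maps $D$ into $D$, while the bound on $H'$ gives $|\Phi'(w)| \le \sigma^{2}/\rho_{+}^{2} = \rho_{-}/\rho_{+} < 1$, so $\Phi$ is a contraction. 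The Banach fixed point theorem produces a unique $w^{*}\in D$ with $F_\mu(w^{*})=z$; since $\Im w^{*}\ge \rho_{+}>\sigma$ we have $w^{*}\in \CC^{+}_{\sigma}$, and
$$|\varphi_\mu(z)| = |w^{*}-z| = |H(w^{*})| \le \frac{\sigma^{2}}{\Im w^{*}} \le \frac{\sigma^{2}}{\rho_{+}} \le \frac{2\sigma^{2}}{s} = \frac{2\sigma^{2}}{\Im z},$$
using $\rho_{+}\ge s/2$. To see that $F_\mu^{-1}$ is genuinely well defined on $\CC^{+}_{2\sigma}$, and not merely that some preimage exists in $D$, I would check that any $w\in\CC^{+}_{\sigma}$ with $F_\mu(w)=z$ must already lie in $D$: writing $a=\Im w$, the relation $a \ge s - |H(w)| \ge s - \sigma^{2}/a$ forces $a^{2}-sa+\sigma^{2}\ge 0$, hence $a\ge \rho_{+}$ (as $a>\sigma>\rho_{-}$), and then $|w-z|\le \sigma^{2}/a \le \rho_{-}$. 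Thus the preimage is unique in all of $\CC^{+}_{\sigma}$, and $F_\mu^{-1}:\CC^{+}_{2\sigma}\to\CC^{+}_{\sigma}$ is single-valued.

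I expect the main obstacle to be pinning down the sharp constant rather than the inversion itself. The threshold $\Im z>2\sigma$ and the target half-plane $\CC^{+}_{\sigma}$ are exactly the values for which the discriminant $s^{2}-4\sigma^{2}$ is nonnegative and $\rho_{+}>\sigma$, so any slack in the estimate $|H(z)|\le \sigma^{2}/\Im z$ would destroy these clean constants. Everything therefore hinges on the first stage delivering total mass precisely $\sigma^{2}$; once that is in hand, the contraction geometry is forced and the bound $2\sigma^{2}/\Im z$ falls out.
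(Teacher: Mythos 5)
Your proposal is correct, but note that the paper does not prove this lemma at all: it is quoted verbatim from \cite{Ma} (Maassen), so the only meaningful comparison is with Maassen's original argument. Your two stages reproduce that argument's skeleton exactly --- first the Nevanlinna-type representation $F_\mu(z)=z+\int (t-z)^{-1}\,d\rho(t)$ with $\rho(\RR)=\sigma^{2}$ (which is how the sharp constant in Lemma \ref{MAS1} is obtained, and which also yields Lemma \ref{addlemma2} for free since $\Im H\ge 0$), and then a local inversion of $F_\mu$ on $\CC^{+}_{2\sigma}$. Where you differ is in the inversion mechanism: Maassen runs Rouch\'e's theorem on the disk $|w-z|\le \tfrac12\Im z$, comparing $F_\mu(w)-z$ with $w-z$, whereas you run a Banach fixed point for $\Phi(w)=z-H(w)$ on the smaller disk of radius $\rho_{-}$ cut out by the quadratic $\rho^{2}-s\rho+\sigma^{2}$. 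Your version buys a slightly sharper localization ($|w^{*}-z|\le\rho_{-}\le 2\sigma^{2}/\Im z$ and $\Im w^{*}\ge\rho_{+}\ge \Im z/2$) and makes uniqueness of the preimage in all of $\CC^{+}_{\sigma}$ completely explicit, at the cost of needing the derivative bound $|H'|\le\sigma^{2}/(\Im w)^{2}$ and convexity of the disk; Rouch\'e needs only the zeroth-order bound. Two cosmetic points: the remainder in your expansion is $o(z^{-1})$ rather than $O(z^{-2})$ when only two moments exist (all you actually use is $\lim_{y\to\infty} iy\,(F_\mu(iy)-iy)=-\sigma^{2}$, which is fine), and you should remark that $F_\mu^{-1}$ is analytic, e.g.\ because $F_\mu'=1+H'\ne 0$ on the disk where $|H'|<1$.
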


\begin{lemma}\label{addlemma2}
For $\mu$ a probability measure on $\RR$ and $z \in \CC^{+}$, we
have that $\Im{F_{\mu}} (z) \geq \Im{z}$ with equality for some $z
\in \CC^{+}$ if and only if $\mu$ is a Dirac measure.
\end{lemma}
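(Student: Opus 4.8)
The plan is to reduce the inequality to a direct consequence of the Cauchy--Schwarz inequality applied to the probability measure $\mu$. Writing $z = x + iy$ with $y = \Im z > 0$, I would first compute the imaginary part of the Cauchy transform explicitly by rationalizing the integrand:
$$\Im G_{\mu}(z) = \Im \int_{-\infty}^{\infty} \frac{d\mu(t)}{z-t} = -y \int_{-\infty}^{\infty} \frac{d\mu(t)}{|z-t|^{2}}.$$
Since $F_{\mu} = 1/G_{\mu}$, taking the imaginary part of a reciprocal gives $\Im F_{\mu}(z) = -\Im G_{\mu}(z)\,/\,|G_{\mu}(z)|^{2}$, so that
$$\Im F_{\mu}(z) = \frac{y \int |z-t|^{-2}\, d\mu(t)}{|G_{\mu}(z)|^{2}}.$$
The claimed inequality $\Im F_{\mu}(z) \geq y$ is therefore equivalent to the single estimate $|G_{\mu}(z)|^{2} \leq \int |z-t|^{-2}\, d\mu(t)$.

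This estimate is exactly Cauchy--Schwarz (equivalently, Jensen's inequality for the strictly convex function $w \mapsto |w|^{2}$) for the function $g(t) = (z-t)^{-1}$ integrated against the probability measure $\mu$: because $\int d\mu = 1$, we have $\bigl|\int g\, d\mu\bigr|^{2} \leq \bigl(\int |g|^{2}\, d\mu\bigr)\bigl(\int d\mu\bigr) = \int |g|^{2}\, d\mu$. This establishes $\Im F_{\mu}(z) \geq \Im z$ for every $z \in \CC^{+}$.

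For the equality case, I would invoke the equality condition in Cauchy--Schwarz: equality holds for a given $z$ precisely when $g(t) = (z-t)^{-1}$ is constant for $\mu$-almost every $t$. Since $z \in \CC^{+}$ is fixed and the map $t \mapsto (z-t)^{-1}$ is injective on $\RR$, this forces $t$ to equal a single value $t_{0}$ for $\mu$-almost every $t$, i.e.\ $\mu = \delta_{t_{0}}$ is a Dirac measure. Conversely, if $\mu = \delta_{t_{0}}$ then $G_{\mu}(z) = (z-t_{0})^{-1}$, whence $F_{\mu}(z) = z - t_{0}$ and $\Im F_{\mu}(z) = \Im z$ identically, so equality holds. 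This closes the equivalence.

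The computations reducing the problem to the scalar estimate are routine; I expect the only point requiring real care to be the equality analysis, where one must verify that the almost-everywhere constancy of $(z-t)^{-1}$ genuinely collapses the support of $\mu$ to a single point. Establishing the injectivity of $t \mapsto (z-t)^{-1}$ on $\RR$ and hence the Dirac conclusion is thus the main (albeit modest) obstacle.
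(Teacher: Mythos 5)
Your proof is correct and complete: the reduction of $\Im F_{\mu}(z)\geq\Im z$ to the estimate $|G_{\mu}(z)|^{2}\leq\int|z-t|^{-2}\,d\mu(t)$ via the explicit formulas for $\Im G_{\mu}$ and $\Im(1/w)$ is exactly right, Cauchy--Schwarz against the probability measure $\mu$ gives that estimate, and the equality analysis (constancy of $t\mapsto(z-t)^{-1}$ $\mu$-a.e.\ forces $\mu=\delta_{t_{0}}$, with the easy converse) settles the Dirac characterization. The paper itself offers no proof of this lemma, simply citing \cite{Ma}, and your argument is the standard one; the only cosmetic point you could add is the observation that $\Im G_{\mu}(z)<0$ strictly on $\CC^{+}$, so $G_{\mu}(z)\neq 0$ and $F_{\mu}=1/G_{\mu}$ is well defined before one takes reciprocals.
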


We also have the following property, first established for the
$R$-transform in \cite{V2} and which is easily seen to hold for
$\varphi_{\mu}$.

\begin{lemma}\label{addlemma}
For $\mu$ and $\nu$ probability measures on $\RR$, $\varphi_{\mu
\boxplus \nu} (z) = \varphi_{\mu} (z) + \varphi_{\nu} (z)$.
\end{lemma}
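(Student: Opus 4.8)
The plan is to reduce the additivity of $\varphi_\mu$ to the additivity of the $R$-transform, $R_{\mu \boxplus \nu} = R_\mu + R_\nu$, already established in \cite{V2}. The key is a simple change of variable relating the two transforms. First I would record the algebraic identity $\varphi_\mu(z) = R_\mu(1/z)$, valid wherever both sides are defined.

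To derive it, write $w = G_\mu(z)$, so that $F_\mu(z) = 1/w$ by the definition $F_\mu = 1/G_\mu$. Inverting gives $z = G_\mu^{-1}(w) = F_\mu^{-1}(1/w)$; setting $\zeta = 1/w$ yields $F_\mu^{-1}(\zeta) = G_\mu^{-1}(1/\zeta)$. Hence
$$\varphi_\mu(\zeta) = F_\mu^{-1}(\zeta) - \zeta = G_\mu^{-1}(1/\zeta) - \zeta = R_\mu(1/\zeta),$$
where the last step uses $R_\mu(1/\zeta) = G_\mu^{-1}(1/\zeta) - \zeta$.

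Given this identity, the conclusion is immediate: for $z$ in the appropriate domain,
$$\varphi_{\mu \boxplus \nu}(z) = R_{\mu \boxplus \nu}(1/z) = R_\mu(1/z) + R_\nu(1/z) = \varphi_\mu(z) + \varphi_\nu(z).$$

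The one step requiring attention is the matching of domains. The additivity of the $R$-transform holds only on a truncated cone in $\CC^-$ near the origin, whereas $\varphi_\mu$ lives on a half-plane $\CC^+_{2\sigma}$ by Lemma \ref{MAS2}. I would check that $z \mapsto 1/z$ carries a suitable subregion of $\CC^+_{2\sigma}$ into the lower-half-plane region on which $R_\mu$, $R_\nu$, and $R_{\mu \boxplus \nu}$ are all defined and additive; since inversion sends points of large positive imaginary part to points of small negative imaginary part near $0$, this alignment holds. This is the sense in which the property ``easily'' transfers from $R$ to $\varphi$, and it is the only (minor) obstacle I anticipate.
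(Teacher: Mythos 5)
Your proposal is correct and matches the paper's intent exactly: the paper gives no explicit proof, stating only that the property is "first established for the $R$-transform in \cite{V2} and ... easily seen to hold for $\varphi_{\mu}$," which is precisely the reduction via $\varphi_{\mu}(z) = R_{\mu}(1/z)$ that you carry out. Your identity $F_{\mu}^{-1}(\zeta) = G_{\mu}^{-1}(1/\zeta)$ and the domain-matching remark about inversion carrying truncated cones in $\CC^{+}$ to truncated cones near $0$ in $\CC^{-}$ are both sound.
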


A measure is said to be $\boxplus$-infinitely divisible if, for all
$n \geq 2$, there exists a probably measure $\mu_{1/n}$ such that
$\mu = \mu_{1/n} ^{\boxplus ^{n}} $. These measures were introduced
in \cite{V2} and have proven to be very well behaved with respect to
free harmonic analysis.  The following properties, proved in \cite
{BV1}, exemplify this observation.

\begin{lemma} \label{infitely_divisible}

For $\mu$ an infinitely divisible measure on $\RR$ with variance
$\sigma^{2}$, the following properties hold:
\begin{enumerate}
\item
$F_{\mu} (z) + \varphi_{\mu} ( F_{\mu} (z)) = z$ for every $z\in \CC
^{+}$.
\item
The inequality  $|\varphi_{\mu}(z)|  \leq  2\sigma ^{2} / \Im {(z)}$
holds for all $z \in \CC ^{+}$.

\item
We have that
$$\varphi_{\mu}(z) = \alpha + \int_{-\infty}^{\infty} \frac{1 +
tz}{z-t} d\nu (t)$$ for $\nu$ is a probability measure.  Moreover,
if $\mu$ is not a dirac measure, we have that $\nu ([a,b])
> 0$ for some $a, b \in \RR$ and $\alpha \in \RR$.

\end{enumerate}

\end{lemma}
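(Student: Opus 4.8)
The plan is to regard part (3), the free analogue of the Lévy--Khintchine representation, as the engine and to deduce parts (1) and (2) from it together with the earlier lemmas. Throughout I will use the identity $\varphi_{\mu}(z) = R_{\mu}(1/z)$, so that the free-cumulant expansion $R_{\mu}(w) = \kappa_{1} + \kappa_{2} w + O(w^{2})$ becomes the nontangential asymptotic $\varphi_{\mu}(z) = \kappa_{1} + \kappa_{2}/z + O(1/z^{2})$ as $z \to \infty$, where $\kappa_{1}$ is the mean and $\kappa_{2} = \sigma^{2}$ the variance; under the standing centering hypothesis $\kappa_{1} = 0$.

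For part (3) the first and hardest step is to continue $\varphi_{\mu}$ analytically from the half-plane $\CC_{2\sigma}^{+}$ of Lemma \ref{MAS2} to all of $\CC^{+}$, as a map into $\overline{\CC^{-}}$ --- equivalently, so that $-\varphi_{\mu}$ is a Pick (Nevanlinna) function. This is exactly where infinite divisibility is used. I would attach to $\mu$ the free convolution semigroup $\{\mu^{\boxplus t}\}_{t \ge 0}$, for which Lemma \ref{addlemma} gives $\varphi_{\mu^{\boxplus t}} = t\,\varphi_{\mu}$ on the common domain; since each $F_{\mu^{\boxplus t}}$ is a self-map of $\CC^{+}$ by Lemma \ref{addlemma2}, this family forms a one-parameter semigroup of holomorphic self-maps of $\CC^{+}$ whose infinitesimal generator is, up to sign, $\varphi_{\mu}$, and the structure theory of such generators forces the desired extension with $\Im \varphi_{\mu} \le 0$ on $\CC^{+}$. (Alternatively, the free analogue of the Lévy--Khintchine limit theorem, applied to the infinitesimal array $\{\mu_{1/n}\}$, which converges weakly to $\delta_{0}$, yields both the extension and the representation directly, with $\nu$ obtained as a weak limit of suitably rescaled measures built from the $\mu_{1/n}$.) Granting the extension, the Nevanlinna representation theorem gives $-\varphi_{\mu}(z) = a + bz + \int \frac{1+tz}{t-z}\,d\tau(t)$ with $a \in \RR$, $b \ge 0$ and $\tau$ a finite positive measure; the asymptotic $\varphi_{\mu}(z) = O(1/z)$ from the previous paragraph kills the linear term $(b=0)$, and rewriting the integral produces the stated form $\varphi_{\mu}(z) = \alpha + \int \frac{1+tz}{z-t}\,d\nu(t)$ with $\alpha = -a \in \RR$ and $\nu = \tau$ a finite positive measure. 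If $\nu \equiv 0$ then $\varphi_{\mu} \equiv \alpha$, whence $F_{\mu}^{-1}(z) = z + \alpha$, $G_{\mu}(z) = (z-\alpha)^{-1}$, and $\mu = \delta_{\alpha}$ is a Dirac mass; contrapositively, a non-Dirac $\mu$ has $\nu$ a nonzero positive measure, so $\nu([a,b]) > 0$ for some interval.

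Parts (2) and (1) then follow quickly. For (2) I subtract the constant term using
\[
\frac{1+tz}{z-t} - t = \frac{1+t^{2}}{z-t},
\]
and match the $1/z$-coefficient of the expansion with the variance to get $\int (1+t^{2})\,d\nu(t) = \sigma^{2}$ (the constant term vanishing since $\kappa_{1} = 0$), so that $\varphi_{\mu}(z) = \int \frac{1+t^{2}}{z-t}\,d\nu(t)$; since $|z-t| \ge \Im z$ for real $t$ this yields
\[
|\varphi_{\mu}(z)| \le \frac{1}{\Im z}\int (1+t^{2})\,d\nu(t) = \frac{\sigma^{2}}{\Im z} \le \frac{2\sigma^{2}}{\Im z}
\]
for every $z \in \CC^{+}$. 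For (1), by definition $H(z) := z + \varphi_{\mu}(z)$ equals $F_{\mu}^{-1}$ where the genuine inverse exists, so $H(F_{\mu}(z)) = z$ for $z \in \CC_{2\sigma}^{+}$ (here $\Im F_{\mu}(z) \ge \Im z > 2\sigma$ keeps $F_{\mu}(z)$ in $\CC_{2\sigma}^{+}$); once part (3) has extended $\varphi_{\mu}$, and hence $H$, to all of $\CC^{+}$, the map $z \mapsto F_{\mu}(z) + \varphi_{\mu}(F_{\mu}(z)) = H(F_{\mu}(z))$ is analytic on $\CC^{+}$ (using $F_{\mu}(\CC^{+}) \subseteq \CC^{+}$ from Lemma \ref{addlemma2}) and agrees with the identity on $\CC_{2\sigma}^{+}$, so the identity theorem gives $F_{\mu}(z) + \varphi_{\mu}(F_{\mu}(z)) = z$ throughout $\CC^{+}$.

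The main obstacle is thus the opening step of part (3): producing the global analytic continuation of $\varphi_{\mu}$ to $\CC^{+}$ with the Pick property. Once that is in hand, the Nevanlinna representation and the two identity-theorem/estimate arguments are essentially mechanical, and it is precisely this extension --- which fails for general finite-variance measures --- that forces the use of infinite divisibility.
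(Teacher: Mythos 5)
The paper itself offers no proof of this lemma --- it is quoted from \cite{BV1} --- so there is no internal argument to compare against; your reconstruction is essentially the argument of that reference and is correct in outline. The whole content is indeed the analytic continuation of $\varphi_{\mu}$ to all of $\CC^{+}$ as a map into $\overline{\CC^{-}}$; after that, the Nevanlinna representation, the identification $\int(1+t^{2})\,d\nu=\sigma^{2}$ (which in fact gives the sharper bound $\sigma^{2}/\Im z$ in part (2)), and the identity-theorem argument for part (1) are routine, exactly as you say. One phrase does need repair: the family $\{F_{\mu^{\boxplus t}}\}_{t\ge 0}$ is \emph{not} a one-parameter semigroup of self-maps of $\CC^{+}$ under composition (already for the semicircular semigroup $F_{\gamma_{t}}\circ F_{\gamma_{s}}\ne F_{\gamma_{t+s}}$), so the structure theory of generators of composition semigroups cannot be invoked as stated. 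What does work --- and is what \cite{BV1} actually does, and what your parenthetical alternative amounts to --- is to write $\varphi_{\mu}(z)=\lim_{t\to 0^{+}}t^{-1}\bigl(z-F_{\mu^{\boxplus t}}(z)\bigr)$ (equivalently, with $t=1/n$ and the divisors $\mu_{1/n}$): each function $t^{-1}\bigl(z-F_{\mu^{\boxplus t}}(z)\bigr)$ is analytic on all of $\CC^{+}$ with nonpositive imaginary part by Lemma \ref{addlemma2}, the family is normal, and every subsequential limit agrees with $\varphi_{\mu}$ on $\CC^{+}_{2\sigma}$, which yields the extension with the Pick property. Two minor further points: the $\nu$ so obtained is a finite positive measure with $\int(1+t^{2})\,d\nu=\sigma^{2}$, not in general a probability measure (the lemma's wording is imprecise here, harmlessly for its later use); and in part (1) the identity $F_{\mu}^{-1}(F_{\mu}(z))=z$ should first be verified for $\Im z$ large, where Lemma \ref{MAS1} forces $F_{\mu}$ to be univalent, before the identity theorem is applied on $\CC^{+}$.
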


Recall the measure  $\gamma$ defined by the density function
$d\gamma (t) = (2 \pi)^{-1} \sqrt {4 - t^{2}} \chi _{(-2,2)} (t)
dt$. We refer to this measure as the $\textit{semicircle law}$.  The
semicircle law plays a role in free probability theory that is in
many ways analogous to that of the Gaussian law in classical
probability theory. Representative of this is the free central limit
theorem, first proved in \cite{V1}.

\begin{theorem}
Let $(A,\varphi)$ be a noncommutative probability space and
$\{a_{j}\}_{j=1}^{\infty} \subseteq A$ be a free family of random
variables such that:
\begin{enumerate}
\item
$\varphi (a_{j}) = 0$ for all $j \leq 1$
\item
$\sup_{j \leq 1} |\varphi (a_{j}^{k})| < \infty$ for each $k \leq 2$
\item
$\lim_{n \rightarrow \infty} n^{-1} \Sigma _{j=1}^{n} \varphi
(a_{j}^{2}) = 1$.
\end{enumerate}
Then $ n^{-1/2} (a_{1} + ... + a_{n})$ converges in distribution to
the semicircle law.
\end{theorem}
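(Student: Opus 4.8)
The plan is to pass to the linearizing transform $\varphi_{\mu}$ and show that the transforms of the scaled partial sums converge to that of the semicircle law. Write $\mu_{j}$ for the distribution of $a_{j}$ and set $\sigma_{j}^{2} = \varphi(a_{j}^{2})$; the target is $\varphi_{\gamma}(z) = 1/z$, the transform of the standard semicircle law. First I would record the two transformation rules I need: under dilation by $\lambda > 0$ one has $\varphi_{D_{\lambda}\mu}(z) = \lambda\,\varphi_{\mu}(z/\lambda)$, which follows from the corresponding dilation rule for $F_{\mu}$, and by Lemma \ref{addlemma} the transform is additive under $\boxplus$. Since $n^{-1/2}(a_{1}+\cdots+a_{n})$ has distribution $\nu_{n} = (D_{n^{-1/2}}\mu_{1})\boxplus\cdots\boxplus(D_{n^{-1/2}}\mu_{n})$, combining these gives, on a common domain,
$$\varphi_{\nu_{n}}(z) = n^{-1/2}\sum_{j=1}^{n} \varphi_{\mu_{j}}(n^{1/2}z).$$

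The heart of the argument is a uniform second-order expansion of each $\varphi_{\mu_{j}}$. Using the moment expansion $G_{\mu_{j}}(w) = w^{-1} + \sigma_{j}^{2}w^{-3} + O(w^{-4})$, valid because $\varphi(a_{j}) = 0$, one inverts to obtain $F_{\mu_{j}}(w) = w - \sigma_{j}^{2}/w + \cdots$ and hence
$$\varphi_{\mu_{j}}(w) = \frac{\sigma_{j}^{2}}{w} + r_{j}(w), \qquad |r_{j}(w)| \leq \frac{C}{|w|^{2}},$$
where $C$ is independent of $j$. Establishing this uniform remainder bound is the main obstacle. The $a_{j}$ are not identically distributed and their spectra need not be uniformly bounded, so no single convergent power series is available on a fixed region; instead I would use the uniform moment control in hypothesis (2) — which bounds $\sigma_{j}^{2}$ and $\varphi(a_{j}^{3})$, the leading coefficient of $r_{j}$, uniformly in $j$ — to bound $|\varphi_{\mu_{j}}(w) - \sigma_{j}^{2}/w|$ directly on a fixed half-plane $\CC_{a}^{+}$ with $a$ exceeding $2\sup_{j}\sigma_{j}$. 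Lemma \ref{MAS2} supplies only the first-order bound $|\varphi_{\mu_{j}}(w)| \leq 2\sigma_{j}^{2}/\Im(w)$, so this step genuinely requires pushing one order further while keeping every estimate uniform in $j$.

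Granting the expansion, substitute $w = n^{1/2}z$ to obtain
$$\varphi_{\nu_{n}}(z) = \frac{1}{nz}\sum_{j=1}^{n} \sigma_{j}^{2} + n^{-1/2}\sum_{j=1}^{n} r_{j}(n^{1/2}z).$$
By hypothesis (3) the first term tends to $1/z$, while the remainder is bounded in modulus by $n^{-1/2}\cdot n \cdot C/(n|z|^{2}) = C/(n^{1/2}|z|^{2})$, which tends to $0$. Thus $\varphi_{\nu_{n}}(z) \to 1/z = \varphi_{\gamma}(z)$ uniformly on compact subsets of $\CC_{a}^{+}$.

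Finally I would convert this convergence of transforms into weak convergence. Since $F_{\nu_{n}}^{-1}(z) = z + \varphi_{\nu_{n}}(z) \to z + 1/z = F_{\gamma}^{-1}(z)$ locally uniformly, a standard inversion argument — using that each $F_{\mu}$ is a self-map of $\CC^{+}$ by Lemma \ref{addlemma2}, together with Hurwitz's theorem — yields $F_{\nu_{n}} \to F_{\gamma}$, and hence $G_{\nu_{n}} \to G_{\gamma}$, locally uniformly on $\CC^{+}$. The continuity theorem relating locally uniform convergence of Cauchy transforms to weak convergence of probability measures then gives $\nu_{n} \to \gamma$ weakly, as claimed.
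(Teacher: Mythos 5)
This theorem is quoted by the paper from \cite{V1} and is not proved in the text, so there is no internal proof to compare yours against; judged on its own terms, your argument is sound and follows the Maassen-style linearization route rather than Voiculescu's original one. Voiculescu's proof treats the $R$-transform as a convergent power series, which is natural for bounded variables but needs uniform control of the spectral radii of the $a_{j}$ (hypothesis (2) bounds each moment uniformly in $j$ but not $\sup_{j}\|a_{j}\|$); your version replaces the power series by integral remainder estimates for $\varphi_{\mu_{j}}$ on a fixed half-plane, which is exactly the machinery the paper itself uses in Lemmas \ref{MAS1}, \ref{MAS2} and \ref{newlemma} for the i.i.d.\ case, and it buys you robustness against non-uniform supports. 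The dilation rule, the additivity from Lemma \ref{addlemma}, the summation $\varphi_{\nu_{n}}(z)=n^{-1/2}\sum_{j}\varphi_{\mu_{j}}(n^{1/2}z)$, and the final inversion via Lemma \ref{MAS1}, Rouch\'e/Hurwitz and the continuity theorem for Cauchy transforms are all correct. Two points deserve more care than your sketch gives them. First, the uniform remainder bound: writing $G_{\mu_{j}}(w)=w^{-1}+\sigma_{j}^{2}w^{-3}+w^{-3}\int t^{3}(w-t)^{-1}\,d\mu_{j}(t)$, the error integral is merely $O(1/\Im w)$ on a half-plane, which after inversion contaminates the $\sigma_{j}^{2}/w$ term at the same order; to get $|r_{j}(w)|\leq C|w|^{-2}$ on all of $\CC_{a}^{+}$ you must expand one step further and invoke the uniform \emph{fourth} moment bound from hypothesis (2) (or else work on a Stolz angle $\Gamma_{\alpha,\beta}$, where $\Im w\geq c_{\alpha}|w|$ and the third absolute moment, controlled by Cauchy--Schwarz from the second and fourth moments, suffices). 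Invoking only $\varphi(a_{j}^{3})$ as you do is not quite enough on a half-plane. Second, a cosmetic but real issue: the hypotheses as printed contain typos ($j\leq 1$ and $k\leq 2$ should read $j\geq 1$ and $k\geq 2$), and your argument correctly uses the intended versions.
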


Consider noncommutative probability space $(A,\varphi)$ and assume
that $A$ is acting on a Hilbert space $H$. A self-adjoint operator
$T$ acting on $H$ is said to be affiliated with $A$ $($in symbols,
$T \eta A)$, if the spectral projections of $T$ belong to $A$.  We
note that our definition of distribution, $\mu_{T} (\sigma) :=
\varphi (E_{T}(\sigma))$ extends to these operators since, by
assumption, $E_{T}(\sigma) \in A$. These distributions will be
probability measures on $\RR$ whose support is not generally
bounded.  This is the class of measures we address in this paper.

\section{Uniform convergence on compact subsets of $(-2,2)$.}

Denote by $\mu_{n}$ the distribution of $n^{-1/2} (X_{1} + \cdots +
X_{n})$ where $X_{1} , \ldots , X_{n}$ are freely independent random
variables with distribution $\mu$.

The following lemmas are known and can be found, more or less
explicitly, in \cite{Ma}. Proofs are presented for the reader's
convenience.

\begin{lemma}\label{lemmagood}
For any $\alpha > 0$ there exists $\beta > 0$ so that
$\varphi_{\mu_{n}}$ is defined on $\Gamma_{\alpha,\beta}$ and
converges uniformly to $z^{-1}$ on compact subsets of
$\Gamma_{\alpha,\beta}$.
\end{lemma}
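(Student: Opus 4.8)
The plan is to reduce the statement to a single asymptotic fact about $\mu$. First I would record the scaling identity $\varphi_{\mu_{n}}(z) = \sqrt{n}\,\varphi_{\mu}(\sqrt{n}z)$. Since $X_{1}+\cdots+X_{n}$ has distribution $\mu^{\boxplus n}$, Lemma \ref{addlemma} gives $\varphi_{\mu^{\boxplus n}} = n\varphi_{\mu}$; and from $G_{D_{c}\nu}(z) = c^{-1}G_{\nu}(z/c)$ for the dilation $D_{c}$ (the pushforward of $\nu$ under $t\mapsto ct$) one gets $F_{D_{c}\nu}^{-1}(w) = cF_{\nu}^{-1}(w/c)$ and hence $\varphi_{D_{c}\nu}(w) = c\,\varphi_{\nu}(w/c)$, which with $c = n^{-1/2}$ and $\nu = \mu^{\boxplus n}$ yields the identity. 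It therefore suffices to prove that $w\varphi_{\mu}(w)\to 1$ uniformly as $w\to\infty$ inside the cone $\{\Im w > \alpha|\Re w|\}$: writing $w = \sqrt{n}z$ gives $\sqrt{n}\varphi_{\mu}(\sqrt{n}z) = z^{-1}\cdot w\varphi_{\mu}(w)$, and dilation by $\sqrt{n}$ preserves the cone while sending $|w| = \sqrt{n}|z|\to\infty$ uniformly for $z$ in a compact subset of $\Gamma_{\alpha,\beta}$.

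Second, I would extract the leading behaviour of $G_{\mu}$ from the two moment conditions. Using $\int t\,d\mu = 0$, $\int t^{2}\,d\mu = 1$, and the algebraic identity $\frac{t}{z-t} = \frac{t}{z} + \frac{1}{z}\frac{t^{2}}{z-t}$, one finds $zG_{\mu}(z) = 1 + z^{-1}h(z)$ with $h(z) := \int\frac{t^{2}}{z-t}\,d\mu(t)$, and hence $z(z-F_{\mu}(z)) = \frac{z}{z+h(z)}\cdot zh(z)$. The crux is to show $zh(z)\to 1$ uniformly in the cone. Writing $zh(z) - 1 = \int t^{2}\,\frac{t}{z-t}\,d\mu(t)$, the key estimate is the uniform cone bound $\bigl|\frac{t}{z-t}\bigr|\le \sqrt{1+\alpha^{-2}}$, valid for all $t\in\RR$ and all $z$ with $\Im z > \alpha|\Re z|$ (obtained by maximizing $t^{2}/((t-\Re z)^{2}+(\Im z)^{2})$ in $t$). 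This dominates the integrand by a fixed multiple of the integrable function $t^{2}$, so dominated convergence, together with the pointwise limit $\frac{t}{z-t}\to 0$, forces $\int t^{2}\frac{t}{z-t}\,d\mu\to 0$, and a routine subsequence argument upgrades this to uniformity as $|z|\to\infty$ in the cone.

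Third, I would transfer these asymptotics from $F_{\mu}$ to $\varphi_{\mu} = F_{\mu}^{-1} - \mathrm{id}$. Since $zh(z)\to 1$ and $|h(z)|\le 1/\Im z\to 0$, the relation $z(z-F_{\mu}(z)) = \frac{z}{z+h(z)}\,zh(z)$ gives $z(z-F_{\mu}(z))\to 1$ uniformly. Setting $z = F_{\mu}^{-1}(w)$, so that $\varphi_{\mu}(w) = z - F_{\mu}(z)$, Lemma \ref{MAS2} bounds $|\varphi_{\mu}(w)|\le 2/\Im w$, whence $z = w + \varphi_{\mu}(w)$ tends to infinity along a fixed cone as $w$ does; the relation above then applies and $w\varphi_{\mu}(w) = z(z-F_{\mu}(z)) - \varphi_{\mu}(w)^{2}\to 1$. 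For the domain claim I would choose $\beta$ so large that $\Gamma_{\alpha,\beta}\subseteq\CC^{+}_{2}$, which by Lemma \ref{MAS2} guarantees $\varphi_{\mu}(\sqrt{n}z)$ is defined there for every $n\ge 1$. Finally, compactness of $K\subseteq\Gamma_{\alpha,\beta}$ makes $|z|$ bounded away from $0$ and $\infty$, so $|z^{-1}|$ is bounded and the uniform limit $w\varphi_{\mu}(w)\to 1$ passes to $\sqrt{n}\varphi_{\mu}(\sqrt{n}z)\to z^{-1}$ uniformly on $K$.

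The main obstacle is the second step: with only a finite second moment there is no third moment available to expand $G_{\mu}$ to the next order, so the naive Taylor estimate fails. The device that rescues the argument is the uniform cone bound on $\bigl|t/(z-t)\bigr|$, which recasts the problem as a clean dominated-convergence statement rather than a moment computation.
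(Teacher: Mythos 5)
Your proof is correct, but it takes a genuinely different route from the paper's. The paper argues softly: Lemma \ref{MAS2} gives the uniform bound $|\varphi_{\mu_{n}}(z)|\leq 2/\Im(z)$, so $\{\varphi_{\mu_{n}}\}$ is a normal family on $\Gamma_{\alpha,\beta}$, and any locally uniform limit $\varphi$ is identified as $z^{-1}$ by inserting the weak convergence $\mu_{n}\to\gamma$ (the free central limit theorem, quoted as known) into the identity $F_{\mu_{n_{j}}}(\varphi_{\mu_{n_{j}}}(z)+z)=z$, which in the limit forces $z+\varphi(z)=F_{\gamma}^{-1}(z)$. You instead prove the quantitative statement $w\varphi_{\mu}(w)\to 1$ in Stolz angles directly from the two moment hypotheses, via the expansion $zG_{\mu}(z)=1+z^{-1}h(z)$, the uniform cone bound $|t/(z-t)|\leq\sqrt{1+\alpha^{-2}}$, and dominated convergence, and then rescale using $\varphi_{\mu_{n}}(z)=\sqrt{n}\varphi_{\mu}(\sqrt{n}z)$. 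Your argument is more self-contained (it does not invoke the free CLT at all, and in effect reproves the analytic lemma underlying it) and it delivers uniform convergence of $\sqrt{n}\varphi_{\mu}(\sqrt{n}z)$ on all of $\Gamma_{\alpha,\beta}$ in one stroke, which the paper only obtains afterwards in Lemma \ref{newlemma} by a separate interpolation over the scales $\sqrt{k+\ell n^{-1}}$; the price is that you must track two routine details the soft argument avoids, namely the domain of validity of the scaling identity (it holds on $\CC^{+}_{2}$, where both sides are defined by Lemma \ref{MAS2} and Lemma \ref{addlemma}) and the standard observation that $z=w+\varphi_{\mu}(w)$ stays in a slightly wider truncated cone because the perturbation is bounded by $2/\Im(w)$. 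Both proofs are valid; yours trades the appeal to compactness and to the free CLT for an explicit computation that also explains where the limit $z^{-1}$ comes from.
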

\begin{proof}
 As seen in \cite{Ma} $\varphi_{\mu_{n}}$ is defined on $\{z\in \CC^{+}:\Im{(z)} > 2\}$.
Thus, for fixed $\alpha > 0$, we need only pick $\beta$ big enough
so that $\Gamma_{\alpha,\beta}$ lies in the above set.

To prove the uniform convergence result, pick $\Omega \subseteq
\Gamma_{\alpha,\beta}$.  Note that $\{\varphi_{\mu_{n}}\}$ is a
normal family since Lemma \ref{MAS2} implies a uniform bound of $1$
on all of $\Gamma_{\alpha,\beta}$.  Therefore, we have subsequences
which converge uniformly on compact subsets of
$\Gamma_{\alpha,\beta}$ and we need only show that any such
subsequence converges to $z^{-1}$. Let $\varphi_{\mu_{n_{j}}}$
converge to $\varphi$. For $\gamma$, the semicircle measure,
$$|F_{\gamma}(z + \varphi(z)) - z| = |F_{\gamma}(z + \varphi(z)) - F_{\mu_{n_{j}}}(\varphi_{\mu_{n_{j}}}(z) +
z)|$$ $$\leq |F_{\gamma} (z + \varphi(z)) -
F_{\gamma}(\varphi_{\mu_{n_{j}}}(z) + z)| +
|F_{\gamma}(\varphi_{\mu_{n_{j}}}(z) + z) - F_{\mu_{n_{j}}}
(\varphi_{\mu_{n_{j}}}(z) + z)|$$ By the free central limit theorem,
$F_{\mu_{n_{j}}}$ converges to $F_{\gamma}$ uniformly on a
neighborhood of $z+ \varphi (z)$, and this implies that
$|F_{\gamma}(\varphi_{\mu_{n_{j}}}(z) + z) - F_{\mu_{n_{j}}}
(\varphi_{\mu_{n_{j}}}(z) + z)|$ converges to $0$.   Therefore, $z+
\varphi (z) = F_{\gamma}^{-1} (z)$ on $\Gamma_{\alpha,\beta}$ which
tells us that $\varphi = \varphi_{\gamma} = z^{-1}$.
\end{proof}

\begin{lemma}\label{newlemma}
For any ${\alpha, \beta} > 0$ and $n$ big enough, $\sqrt{n}
\varphi_{\mu} (\sqrt{n} z)$ is defined for all $z \in
\Gamma_{\alpha, \beta}$, agrees with $\varphi_{\mu_{n}}$ where both
are defined and converges to $z^{-1}$ uniformly on this set.
\end{lemma}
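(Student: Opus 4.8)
The plan is to reduce everything to a single scaling identity together with the a priori bound of Lemma~\ref{MAS2}. First I would record that identity. Writing $\mu^{\boxplus n}$ for the law of $X_1 + \cdots + X_n$, Lemma~\ref{addlemma} gives $\varphi_{\mu^{\boxplus n}} = n\,\varphi_{\mu}$. Since $\mu_{n}$ is the pushforward of $\mu^{\boxplus n}$ under $t \mapsto t/\sqrt{n}$, and since a dilation $D_{c}\nu$ (the law of $cX$ when $X$ has law $\nu$) satisfies $G_{D_{c}\nu}(z) = c^{-1}G_{\nu}(z/c)$ and hence $\varphi_{D_{c}\nu}(z) = c\,\varphi_{\nu}(z/c)$, I obtain
$$\varphi_{\mu_{n}}(z) = \frac{1}{\sqrt{n}}\,\varphi_{\mu^{\boxplus n}}(\sqrt{n}z) = \frac{1}{\sqrt{n}}\cdot n\,\varphi_{\mu}(\sqrt{n}z) = \sqrt{n}\,\varphi_{\mu}(\sqrt{n}z)$$
wherever both sides are defined. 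This is exactly the asserted agreement, and it singles out $\sqrt{n}\,\varphi_{\mu}(\sqrt{n}z)$ as the natural object to analyze.

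For the definedness claim I would use the geometry of the cone. If $z \in \Gamma_{\alpha,\beta}$ then $\Im z > \alpha|\Re z|$ forces $|z|^{2} < (1+\alpha^{-2})(\Im z)^{2}$, so $\Im z > \frac{\alpha}{\sqrt{1+\alpha^{2}}}|z| \geq \frac{\alpha\beta}{\sqrt{1+\alpha^{2}}} =: c_{0} > 0$. Because $\mu$ has variance $1$, Lemma~\ref{MAS2} makes $\varphi_{\mu}$ defined on $\{z : \Im z > 2\}$, so $\varphi_{\mu}(\sqrt{n}z)$ is defined as soon as $\sqrt{n}\,\Im z > 2$; as $\Im z > c_{0}$ throughout $\Gamma_{\alpha,\beta}$, this holds for every $z \in \Gamma_{\alpha,\beta}$ once $n > 4/c_{0}^{2}$.

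For uniform convergence I would split $\Gamma_{\alpha,\beta}$ into a bounded core $\{z : |z| \leq M\}$ and a tail $\{z : |z| > M\}$. On the tail, Lemma~\ref{MAS2} (variance $1$) gives $|\sqrt{n}\,\varphi_{\mu}(\sqrt{n}z)| = \sqrt{n}\,|\varphi_{\mu}(\sqrt{n}z)| \leq 2/\Im z \leq \frac{2\sqrt{1+\alpha^{2}}}{\alpha|z|}$, while $|z^{-1}| = 1/|z|$, so $|\sqrt{n}\,\varphi_{\mu}(\sqrt{n}z) - z^{-1}| \leq C_{\alpha}/|z| < C_{\alpha}/M$ with $C_{\alpha}$ independent of $n$; given $\varepsilon > 0$ I fix $M$ with $C_{\alpha}/M < \varepsilon$. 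On the core, whose closure is compact and on which $\Im z \geq c_{0}$, the functions $\sqrt{n}\,\varphi_{\mu}(\sqrt{n}z)$ are, for $n > 4/c_{0}^{2}$, defined on an open neighborhood and uniformly bounded by $2/c_{0}$ (again by Lemma~\ref{MAS2}), hence form a normal family; re-running the functional-equation argument of Lemma~\ref{lemmagood} identifies the only possible subsequential limit as $z^{-1}$, giving uniform convergence on the core for $n$ beyond some $N$. Combining the two regions yields $\sup_{z \in \Gamma_{\alpha,\beta}}|\sqrt{n}\,\varphi_{\mu}(\sqrt{n}z) - z^{-1}| \leq 2\varepsilon$ for $n > N$.

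The main obstacle is precisely the passage from compact subsets, where Lemma~\ref{lemmagood} already suffices, to the full unbounded cone. What rescues this is that the $\sqrt{n}$ prefactor exactly cancels the $1/\sqrt{n}$ produced by evaluating Lemma~\ref{MAS2}'s bound $|\varphi_{\mu}(w)| \leq 2/\Im w$ at $w = \sqrt{n}z$, leaving a tail estimate $\lesssim 1/|z|$ that is uniform in $n$, and the cone condition $\Im z \gtrsim |z|$ converts this into genuine decay as $|z| \to \infty$. I should take care only that the core sits in a region where the normal-family argument genuinely runs; this is automatic, since for $n > 4/c_{0}^{2}$ the maps are defined and bounded on an open neighborhood of the core, which is all that argument requires.
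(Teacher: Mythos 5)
Your proposal is correct, and its treatment of the definedness claim, the agreement with $\varphi_{\mu_{n}}$, and the tail estimate for $|z|$ large matches the paper's (the paper likewise observes via Lemma \ref{MAS2} that both $\sqrt{n}\varphi_{\mu}(\sqrt{n}z)$ and $z^{-1}$ decay uniformly in $n$ as $|z|\to\infty$ in the cone, reducing everything to compact pieces). Where you genuinely diverge is in the passage from Lemma \ref{lemmagood} to convergence on a compact piece of the \emph{given} cone $\Gamma_{\alpha,\beta}$, whose $\beta$ may be much smaller than the $\beta'$ produced by Lemma \ref{lemmagood}. The paper handles this with a discretization: it picks $k$ with $\sqrt{k}\,\Gamma_{\alpha,\beta}\subseteq\Gamma_{\alpha,\beta'}$, writes an arbitrary index $m\geq Nk$ as $m=nk+\ell$ with $0\leq\ell<k$, and uses the uniform convergence of $\varphi_{\mu_{n}}(\alpha z)\to(\alpha z)^{-1}$ over the compact parameter range $\alpha\in[\sqrt{k},\sqrt{k+1}]$ to transfer the estimate down to $\Gamma_{\alpha,\beta}$. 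You instead exploit that the functions $g_{n}(z)=\sqrt{n}\varphi_{\mu}(\sqrt{n}z)$ are themselves defined on the expanding half-planes $\{\Im z>2/\sqrt{n}\}$ and satisfy $|g_{n}(z)|\leq 2/\Im z$ there, so for $n$ large they form a uniformly bounded family on a fixed open neighborhood of the closed core, and Montel plus identification of the subsequential limit finishes the job. This is cleaner and avoids the bookkeeping of the $m=nk+\ell$ argument; what the paper's route buys is that it never needs to invoke normal families a second time, only the already-proved uniform convergence on the larger cone. One small point of care in your write-up: ``re-running the functional-equation argument of Lemma \ref{lemmagood}'' should not be taken literally at points with $\Im z$ close to $c_{0}/2$, where one cannot guarantee $z+\varphi(z)\in\CC^{+}$; the correct identification of the subsequential limit is to note that $g_{n}=\varphi_{\mu_{n}}\to z^{-1}$ on the high sub-cone $\Gamma_{\alpha,\beta'}$ (using the agreement you established together with Lemma \ref{lemmagood}) and then to invoke the identity theorem on the connected neighborhood. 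Since you have already proved the agreement identity, this is a one-line repair rather than a gap.
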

\begin{proof}
As was proven in \cite{Ma}, $\varphi_{\mu}$ is defined on $\{z\in
\CC^{+}:\Im{(z)} > 2\}$.  Pick $n$ large enough so that
$\Im{(\sqrt{n}z)} > 2$ for all $z \in \Gamma_{\alpha, \beta}$.  Then
$\sqrt{n} \varphi (\sqrt{n} z)$ is defined.

Now, by definition, $\varphi_{\mu_{n}} (F_{\mu_{n}} (z)) +
F_{\mu_{n}} (z) = z$ for all z with $\Im{(z)} > 2$.  Furthermore, it
was established in \cite{BV1} that $\varphi_{\mu_{n}} (z) = \sqrt{n}
\varphi_{\mu} (\sqrt{n} z)$ on this set.  Therefore, $\sqrt{n}
\varphi_{\mu} (\sqrt{n} F_{\mu_{n}} (z)) + F_{\mu_{n}} (z)$ is
defined on all of  $\Gamma_{\alpha, \beta}$ and is equal to the
identity for those $z$ with $\Im{(z)} > 2$.  By analytic
continuation, this implies that $\sqrt{n} \varphi_{\mu} (\sqrt{n}
F_{\mu_{n}} (z)) + F_{\mu_{n}} (z) = z$ for all $z \in
\Gamma_{\alpha, \beta}$.  This implies, by definition of
$\varphi_{\mu_{n}}$, that the two functions agree for those $z \in
\Gamma_{\alpha, \beta}$ where both are defined.

With regard to the question of uniform convergence on
$\Gamma_{\alpha, \beta}$, choose $\alpha$ , $\beta$, $\epsilon \ >
0$ and $\Omega \subseteq \Gamma_{\alpha, \beta}$ compact. By Lemma
\ref{lemmagood}, there exists a $\beta'$ $> 0$ so that
$\varphi_{\mu_{n}}$ is defined on $\Gamma_{\alpha,\beta'}$ for all
$n$ and converges to $z^{-1}$ uniformly on all compact subsets. Now,
pick $k$ such that $\sqrt{k} \Gamma_{\alpha, \beta} \subseteq
\Gamma_{\alpha,\beta'}$.  By the previous lemma, there exists $N >
0$ such that for all $n \geq N$  we have that
$$|\varphi_{\mu_{n}} (\alpha z) - (\alpha z)^{-1}|  <
(k+1)^{-1/2}\epsilon$$ for all $\alpha \in [k , k+1]$ and $z \in
\Omega$. We assume that $N
> k$ and, for $n \geq N$, consider numbers of the form $\alpha = \sqrt{k +
\ell n^{-1}}$ for $0 \leq \ell < k$.  We have that
$$|\varphi_{\mu_{n}} ((\sqrt{k
+ \ell n^{-1}}) z) - ((\sqrt{k + \ell n^{-1}}) z)^{-1} | <
(k+1)^{-1/2}\epsilon \leq (k + \ell n^{-1})^{-1/2} \epsilon$$
$$\Rightarrow  |\sqrt{k + \ell n^{-1}} \varphi_{\mu_{n}} ((\sqrt{k
+ \ell n^{-1}}) z) - z^{-1} | < \epsilon$$ and, since the previous
lemma implies that $\varphi_{\mu_{n}} (z) = \sqrt{n} \varphi_{\mu}
(\sqrt{n} z)$ for all $z \in \Gamma_{\alpha,\beta'}$, we have the
following:
$$|\sqrt{nk + \ell} \varphi_{\mu} ((\sqrt{nk + \ell}) z) - z^{-1} | < \epsilon $$ for all $z \in \Omega$ and $0 \leq
\ell < k$.  This implies that
$$|\sqrt{m} \varphi_{\mu} ((\sqrt{m}) z) - z^{-1} | < \epsilon $$ for
all $m \geq Nk$.  Thus, we have uniform convergence on compact
subsets of $\Gamma_{\alpha, \beta}$

For uniform convergence on all of $\Gamma_{\alpha,\beta}$, by Lemma
\ref{MAS2}, $\varphi_{\mu_{n}} (z)$ goes to zero uniformly as $|z|
\rightarrow \infty$ in $\Gamma_{\alpha,\beta}$.  As the same holds
for $z^{-1}$, by proving uniform convergence on compact sets, we
have the general result as well.
\end{proof}

\begin{theorem}\label{theoremA}
Let $\mu$ be a measure with mean $0$ and variance $1$.  Then
$d\mu_{n} / dt$ converges uniformly to the semicircle law on compact
subsets of $(-2,2)$.
\end{theorem}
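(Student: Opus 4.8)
The plan is to recover the density of $\mu_n$ from its Cauchy transform via the Stieltjes inversion formula, and to prove that $G_{\mu_n}$ converges to $G_\gamma$ uniformly on a neighborhood of any compact subset of $(-2,2)$ sitting just above the real axis. Concretely, for a point $x\in(-2,2)$ and small $y>0$ we have $d\mu_n/dt\,(x) = -\pi^{-1}\Im G_{\mu_n}(x+iy)$ in the limit $y\to 0^+$; the semicircle density satisfies the analogous identity with $G_\gamma$. So the entire theorem reduces to controlling $G_{\mu_n}$ near the real axis, uniformly in $x$ over a compact set $K\subseteq(-2,2)$.

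The main tool is the subordination-type relation packaged in the earlier lemmas. Recall that $F_{\mu_n}(z)=\varphi_{\mu_n}(z)+z$ inverted reads $F_{\mu_n}^{-1}(z)=z+\varphi_{\mu_n}(z)$, and by Lemma~\ref{newlemma} we know $\varphi_{\mu_n}(z)\to z^{-1}$ uniformly on $\Gamma_{\alpha,\beta}$. Since $z+z^{-1}=F_\gamma^{-1}(z)$ for the semicircle law, this says $F_{\mu_n}^{-1}\to F_\gamma^{-1}$ uniformly on the sector $\Gamma_{\alpha,\beta}$. First I would invert this: because $F_{\mu_n}$ and $F_\gamma$ are conformal maps with image containing a fixed region (using Lemma~\ref{addlemma2} to control that $\Im F\ge \Im z$), the uniform convergence of the inverses on $\Gamma_{\alpha,\beta}$ transfers to uniform convergence $F_{\mu_n}\to F_\gamma$ on the image of a slightly smaller sector, and hence $G_{\mu_n}=1/F_{\mu_n}\to G_\gamma=1/F_\gamma$ there. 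The point is that $F_\gamma$ maps a neighborhood of a compact subset of $(-2,2)$ (pushed slightly into $\CC^+$) biholomorphically into such a sector, because on $(-2,2)$ the semicircle Cauchy transform $G_\gamma(x)=\tfrac12\bigl(x-\sqrt{x^2-4}\bigr)$ has nonzero imaginary part and stays bounded away from $0$ and $\infty$.

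From uniform convergence of $G_{\mu_n}\to G_\gamma$ on a complex neighborhood just above $K$, I would then pass to the boundary values. The semicircle transform $G_\gamma$ extends continuously to $(-2,2)$ from above, with $\Im G_\gamma(x)=-\tfrac1{2\pi}\sqrt{4-x^2}$, and $G_\gamma$ is equicontinuous on the relevant region. Combined with the locally uniform convergence $G_{\mu_n}\to G_\gamma$, a standard three-$\epsilon$ argument gives $\Im G_{\mu_n}(x+iy_n)\to \Im G_\gamma(x)$ uniformly for $x\in K$ along a suitable sequence $y_n\to 0$. Finally, to identify $\Im G_{\mu_n}(x+iy)$ with the density $d\mu_n/dt$ I would invoke that $\mu_n$ is absolutely continuous on this region for large $n$ with a density given by the nontangential boundary value of $-\pi^{-1}\Im G_{\mu_n}$; this is where I would lean on regularity of free convolution (the earlier lemmas guarantee $\Im F_{\mu_n}>\Im z$ strictly for non-Dirac $\mu$, so $G_{\mu_n}$ has no poles and its boundary behaviour is controlled).

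The hard part, I expect, is precisely the transfer step: upgrading uniform convergence of the \emph{inverse} functions $\varphi_{\mu_n}\to z^{-1}$ on a sector into uniform convergence of $G_{\mu_n}\to G_\gamma$ in a region whose boundary \emph{touches} $(-2,2)$, and then justifying that the boundary density really is the nontangential limit of $-\pi^{-1}\Im G_{\mu_n}$ uniformly in the base point. The uniformity in $x\in K$ all the way down to the real axis is delicate because the sector $\Gamma_{\alpha,\beta}$ stays away from $\RR$, so one must track how much of $(-2,2)$ the map $F_\gamma^{-1}$, applied to a shrinking neighborhood of $K$, actually lands inside $\Gamma_{\alpha,\beta}$; controlling this requires the explicit form of $G_\gamma$ and a compactness argument to get a single $\beta$ and a single rate working uniformly over $K$.
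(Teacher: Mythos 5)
Your plan is essentially the paper's proof: both rest on Lemma \ref{newlemma}, which gives $z+\sqrt{n}\varphi_{\mu}(\sqrt{n}z)\to z+z^{-1}=F_{\gamma}^{-1}(z)$ uniformly on a sector $\Gamma_{\alpha,\beta}$ with $\alpha,\beta$ small enough to contain the unit-circle arc $F_{\gamma}([-2+\epsilon,2-\epsilon])$, followed by inversion of these conformal maps to obtain $F_{\mu_{n}}\to F_{\gamma}$ on a region touching $(-2,2)$, and then Stieltjes inversion. The ``transfer step'' you correctly flag as the crux is exactly what the paper executes: it applies Rouch\'e's theorem on a neighborhood of that arc to invert $z+\sqrt{n}\varphi_{\mu}(\sqrt{n}z)$, and uses analytic continuation from the high-imaginary-part region to identify the resulting local inverse with $F_{\mu_{n}}$ itself.
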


\begin{proof}

It was shown in \cite{BV3} that $\mu_{n}$ is absolutely continuous
with respect to the Lebesgue measure for $n$ large enough.  We turn
our attention to the question of convergence.

Consider the interval $[-2 + \epsilon , 2 - \epsilon]$ and let
$\Lambda_{1} = \{ z : |z| = 1 , \Im{(z)} \geq \delta\}$ where
$\delta$ is chosen so that $\{ z + z^{-1} : z \in \Lambda_{1} \} =
[-2 + \epsilon , 2 - \epsilon]$.  Let $\Lambda_{2} = \{\lambda i : 1
< \lambda < 3 \}$. We denote by $\Omega$ a $2^{-1}\delta$
neighborhood of $\Lambda_{1} \cup \Lambda_{2}$.

Note that $\Omega \subset \Gamma_{\alpha , \beta}$ for appropriate
$\alpha$ and $\beta$.  Observe that $z + z^{-1}$ maps $\Omega$
conformally onto a neighborhood of $[-2 + \epsilon , 2 - \epsilon]
\cup i[0 , 2]$.  By lemma \ref{newlemma} and Rouche's theorem, the
same must hold for $\gamma_{n} (z) =
\sqrt{n}\varphi_{\mu}(\sqrt{n}z) + z$ when $n$ is large enough.
Thus, there exists a partition $\Omega = \Omega_{1} \cup \Omega_{2}
\cup \Omega_{3}$ with the following properties:

\begin{enumerate}

\item
$\Omega_{1}$ and $\Omega_{2}$ are open and connected.

\item
$\gamma_{n} (\Omega) \cap \CC^{+} = \gamma_{n} (\Omega_{1}) .$

\item
$\gamma_{n} (\Omega) \cap \CC^{-} = \gamma_{n} (\Omega_{2}) .$

\item
$\gamma_{n} (\Omega) \cap \RR = \gamma_{n} (\Omega_{3}) .$

\item
$2i \in \Omega_{1} .$

\end{enumerate}

Now, pick $t \in [-2 + \epsilon , 2 - \epsilon]$.  There exists a
positive real number $h_{t}$ and a path $z_{t}^{n} :[0 , h_{t}]
\rightarrow \Omega$ so that $\gamma_{n} (z_{t}^{n} (h) ) = t + ih$ .
Note that $z_{t}^{n} (h) \in \Omega_{1}$ for all $h \in [0 ,
h_{t}]$.   Lemmas \ref{lemmagood} and \ref{newlemma} imply that
$F_{\mu_{n}} (\gamma_{n} (z)) = z$ for those $z \in \Omega_{1}$ with
$\Im{(z)} > 2$.  As $\Omega_{1}$ is open and connected, this must
hold on the entire set by analytic continuation. Thus,
$$F_{\mu_{n}} (t + ih) = F_{\mu_{n}} (\gamma_{n} (z_{t}^{n} (h) )) =
z_{t}^{n} (h) .$$

Since $\gamma_{n}$ converges to $z + z^{-1}$ uniformly on $\Omega$,
we have that $z_{t}^{n} (0) \rightarrow (2^{-1}t  + i(\sqrt{1 -
(2^{-1}t )^{2}}) )$ uniformly over $t \in [-2 + \epsilon , 2 -
\epsilon]$ and $n \rightarrow \infty$.

Now, by the Stieltjes inversion formula,

$$\frac{d \mu_{n}}{dx} (t) = \lim_{h \downarrow 0} \frac{-1}{\pi} \Im{(G_{\mu_{n}} (t + ih
))} = \lim_{h \downarrow 0} \frac{-1}{\pi} \Im{(F_{\mu_{n}} (t + ih)
^{-1})}$$ $$ = \lim_{h \downarrow 0} \frac{-1}{\pi} \Im {(z_{n}^{t}
(h)^{-1})} = \frac{-1}{\pi} \Im {(z_{n}^{t} (0)^{-1})} \rightarrow
\frac{-1}{\pi}\Im{ (2^{-1}(t - \sqrt{t^{2} - 4}))} = \frac{1}{2\pi}
\sqrt{4 - t^{2}}
$$
As the convergence $z_{t}^{n} (0) \rightarrow ((2^{-1}t )^{2} +
i(\sqrt{1 - (2^{-1}t )^{2}}) )$ is uniform over $t \in [-2 +
\epsilon , 2 - \epsilon]$, we have that $\frac{d \mu_{n}}{dx} (t)
\rightarrow \frac{1}{2\pi} \sqrt{4 - t^{2}}$ uniformly over $t \in
[-2 + \epsilon , 2 - \epsilon]$, completing the proof.

\end{proof}

For any probability measure $\mu$, define the cumulative
distribution function $f_{\mu} (t) := \mu ((-\infty , t])$.  A
number of free analogues of Berry-Esseen have been proven with
respect to this function.  Most notably, in \cite{Ka}, it was shown
that for $\mu$ a measure with bounded support,
$$|f_{\mu_{n}} (t) - f_{\gamma} (t) | \leq CL^{3}n^{-1/2}$$ where C
is an absolute constant and supp$(\mu) \subseteq [-L,L]$.  It was
shown in \cite{CG} that $$|f_{\mu_{n}} (t) - f_{\gamma} (t)| \leq C
(|m_{3} (\mu)| + m_{4} (\mu)^{1/2}) n^{-1/2}$$ where $m_{k} (\mu)$
denotes the $kth$ moment of the measure $\mu$ and $C$ is and
absolute constant. From Theorem \ref{theoremA} we derive the
following partial result, stronger insofar as it makes no moment
assumptions beyond the second and does not require compact support
of the measure, but weaker since it does not provide a definite rate
of convergence with respect to $n$:

\begin{corollary}
For $\mu$ a probability measure with mean $0$ and variance $1$,
$|f_{\mu_{n}} (t) - f_{\gamma} (t)| \rightarrow 0$ uniformly as $ n
\rightarrow \infty$.
\end{corollary}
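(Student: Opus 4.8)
The plan is to upgrade the uniform convergence of densities on compact subsets of $(-2,2)$, established in Theorem \ref{theoremA}, to uniform convergence of the cumulative distribution functions on all of $\RR$. The overall strategy is to split the real line into a central compact region where the density convergence applies directly, and two tail regions where we control the cumulative distribution functions by total-mass arguments rather than pointwise density bounds.

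First I would fix $\epsilon > 0$ and restrict attention to the interval $[-2+\epsilon', 2-\epsilon']$ for a suitable small $\epsilon' > 0$. On this compact set, Theorem \ref{theoremA} gives that $d\mu_n/dt \to (2\pi)^{-1}\sqrt{4-t^2}$ uniformly, and since the interval has finite length, integrating this uniform convergence yields that $f_{\mu_n}(t) - f_{\mu_n}(-2+\epsilon')$ converges uniformly over $t$ in this interval to the corresponding increment $f_\gamma(t) - f_\gamma(-2+\epsilon')$ of the semicircle distribution function.

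The crux of the argument is to control the tails, i.e. to show that $\mu_n((-\infty, -2+\epsilon'])$ converges to $f_\gamma(-2+\epsilon')$ and similarly at the right endpoint. For the semicircle side this is immediate since $f_\gamma(-2+\epsilon') \to 0$ as $\epsilon' \to 0$ because $\gamma$ is supported on $[-2,2]$. For the measures $\mu_n$, I would invoke weak convergence $\mu_n \to \gamma$ from the free central limit theorem: since $\gamma$ assigns no mass to the boundary points $\{-2+\epsilon', 2-\epsilon'\}$, the portmanteau theorem gives $f_{\mu_n}(-2+\epsilon') \to f_\gamma(-2+\epsilon')$ for each fixed $\epsilon'$. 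Combining this with the uniform increment convergence from the central region yields uniform convergence of $f_{\mu_n}(t)$ to $f_\gamma(t)$ for $t$ ranging over the central interval.

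Finally I would handle the full line by an $\epsilon$-argument using tightness. The hard part is the uniformity of tail control: weak convergence alone gives convergence at fixed points, so I would choose $\epsilon'$ small enough that $f_\gamma(-2+\epsilon') < \epsilon/3$ and $1 - f_\gamma(2-\epsilon') < \epsilon/3$, then pick $N$ large so that for $n \geq N$ the two tail masses $\mu_n((-\infty,-2+\epsilon'])$ and $\mu_n([2-\epsilon',\infty))$ are each below $\epsilon/2$. Since both $f_{\mu_n}$ and $f_\gamma$ are monotone and confined to these small-mass tails, their difference is automatically at most $\epsilon/2$ throughout each tail region, regardless of $t$. Together with the uniform estimate on the central interval this produces a single bound $|f_{\mu_n}(t) - f_\gamma(t)| < \epsilon$ holding for all $t \in \RR$ and all $n$ sufficiently large, which is exactly the claimed uniform convergence.
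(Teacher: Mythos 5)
Your proof is correct and takes essentially the same route as the paper: split $\RR$ into a central interval $[-2+\delta,2-\delta]$, where integrating the uniform density convergence of Theorem \ref{theoremA} controls $f_{\mu_n}-f_\gamma$, and two tails, where monotonicity plus a small-mass bound finishes the job. The only cosmetic difference is that you obtain the tail-mass bound for $\mu_n$ from weak convergence via the portmanteau theorem, while the paper gets it as the complement of the integrated density estimate, namely $\mu_n(\mathrm{tail}) \le 1 - \mu_n([-2+\delta,2-\delta])$; both are valid.
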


\begin{proof}
Choose $\epsilon > 0$.  Pick $\delta > 0$ such that $\gamma ([-2 +
\delta , 2 - \delta]) > 1- 4^{-1} \epsilon$.  By Theorem
\ref{theoremA}, there exists an $N$ such that for all $n \geq N$,
$|d\mu_{n} / dt (x) - d\gamma / dt (x)| < 8^{-1}\epsilon$ for all $x
\in [-2 + \delta , 2 - \delta]$.  For $\sigma \subseteq [-2 + \delta
, 2 - \delta]$, we have the following:
$$|\mu_{n} (\sigma) - \gamma (\sigma) |
\leq \int_{\sigma} |\frac{d_{\gamma}}{dt} (x) - {d_{\mu_{n}}} / {dt}
(x)| dt \leq  8^{-1} \epsilon |\sigma | \leq 2^{-1} \epsilon$$
Bearing in mind that $\mu_{n}$ is a probability measure, we have the
following:

\begin{enumerate}

\item
For $t \leq -2 + \delta$, we have that $f_{\mu_{n}}(t) = \mu_{n}
((-\infty , t])) = 1 - \mu_{n} ((t,\infty)) \leq 1 - \mu_{n} ([-2 +
\delta , 2 - \delta]) \leq 1 - (1 - \epsilon / 2) = \epsilon / 2$.

\item
For $t \in [-2 + \delta, 2 - \delta]$, we have that $f_{\mu_{n}}(t)
= \mu_{n} ((-\infty, -2 + \delta)) + \mu_{n} ([-2 + \delta, t]) \leq
\epsilon / 2 + \epsilon / 2$.

\item
For $t > 2 - \delta$, we have that $f_{\mu_{n}} (t) \geq \mu_{n}
([-2 + \delta, 2 - \delta]) > 1 - \epsilon$.

\end{enumerate}
Thus, our claim holds.

\end{proof}

It would be interesting to see whether $|f_{\mu_{n}} (t) -
f_{\gamma} (t)| \leq C n^{-1/2}$, for $C$ an absolute constant, with
no assumptions on the boundedness of the support and no assumptions
on the existence of moments beyond the second.

\section{Uniform convergence for infinitely divisible measures.}

We begin with a lemma before proving the main result of the section.
\begin{lemma} \label{lastlemma}
Let $\mu$ be an infinitely divisible measure with mean $0$ and
variance $1$.  Then there exists a $C > 0$ so that for n big enough,
$$F_{\mu_{n}} (\CC^{+}) \subseteq \{z \in \CC^{+} : |z|
> C \}$$
\end{lemma}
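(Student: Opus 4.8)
The plan is to combine the free Lévy--Khintchine representation of Lemma \ref{infitely_divisible} with an explicit description of the image $F_{\mu_{n}}(\CC^{+})$ in terms of the sign of $\Im\bigl(w+\varphi_{\mu_{n}}(w)\bigr)$. First observe that each $\mu_{n}$ is again $\boxplus$-infinitely divisible, with mean $0$ and variance $1$: indeed $\mu_{n}$ is a dilation of $\mu^{\boxplus n}$, and both the free convolution power and the dilation preserve infinite divisibility. Hence Lemma \ref{infitely_divisible} applies to $\mu_{n}$, giving the functional equation $F_{\mu_{n}}(z)+\varphi_{\mu_{n}}(F_{\mu_{n}}(z))=z$ on $\CC^{+}$ together with a representation $\varphi_{\mu_{n}}(z)=\alpha_{n}+\int \frac{1+sz}{z-s}\,d\nu_{n}(s)$ with $\alpha_{n}\in\RR$ and $\nu_{n}$ a finite positive measure.

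Next I would extract the key inequality. Writing $w=F_{\mu_{n}}(z)$, the functional equation gives $w+\varphi_{\mu_{n}}(w)=z\in\CC^{+}$, so $\Im(w+\varphi_{\mu_{n}}(w))>0$ for every $w$ in the image $F_{\mu_{n}}(\CC^{+})$. A direct computation from the representation, using the identity $\Im\frac{1+sw}{w-s}=-\,\frac{(1+s^{2})\Im w}{|w-s|^{2}}$ valid for $s\in\RR$, yields
$$\Im\bigl(w+\varphi_{\mu_{n}}(w)\bigr)=\Im(w)\left(1-\int \frac{1+s^{2}}{|w-s|^{2}}\,d\nu_{n}(s)\right).$$
Since $\Im w>0$, membership of $w$ in $F_{\mu_{n}}(\CC^{+})$ forces $\int \frac{1+s^{2}}{|w-s|^{2}}\,d\nu_{n}(s)<1$. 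Thus it suffices to produce $C\in(0,1)$ and $N$ so that, for all $n\ge N$ and all $w\in\CC^{+}$ with $|w|\le C$, the reverse inequality $\int \frac{1+s^{2}}{|w-s|^{2}}\,d\nu_{n}(s)\ge 1$ holds; any such $w$ is then excluded from the image.

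To control $\nu_{n}$ I would pass to the weighted measure $dp_{n}(s):=(1+s^{2})\,d\nu_{n}(s)$, which has total mass $\int(1+s^{2})\,d\nu_{n}$, the coefficient of $z^{-1}$ in the large-$z$ expansion of $\varphi_{\mu_{n}}$, equal to the variance $1$; so $p_{n}$ is a probability measure. Using the scaling $\varphi_{\mu_{n}}(z)=\sqrt{n}\,\varphi_{\mu}(\sqrt{n}z)$ of Lemma \ref{newlemma} and the partial-fraction identity
$$\frac{1+nsz}{z-s}=\frac{(n-1)s}{1+s^{2}}+\frac{1+ns^{2}}{1+s^{2}}\cdot\frac{1+sz}{z-s},$$
a routine rearrangement identifies $p_{n}$ as the push-forward of the fixed probability measure $dp(t):=(1+t^{2})\,d\nu(t)$ under $t\mapsto t/\sqrt{n}$. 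Consequently $p_{n}\to\delta_{0}$ weakly, and concretely $p_{n}(\{|s|\le\eta\})=p(\{|t|\le \eta\sqrt{n}\})\to 1$ for every fixed $\eta>0$. Now fix $C\in(0,1)$, choose $\eta\in(0,1-C)$ so that $(C+\eta)^{2}<1$, and pick $N$ with $p_{n}(\{|s|\le\eta\})>(C+\eta)^{2}$ for $n\ge N$. For such $n$ and any $w\in\CC^{+}$ with $|w|\le C$, the bound $|w-s|\le |w|+|s|\le C+\eta$ on $\{|s|\le\eta\}$ gives
$$\int \frac{1+s^{2}}{|w-s|^{2}}\,d\nu_{n}(s)=\int \frac{dp_{n}(s)}{|w-s|^{2}}\ge \frac{p_{n}(\{|s|\le\eta\})}{(C+\eta)^{2}}>1,$$
which is exactly the required reverse inequality. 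Hence $F_{\mu_{n}}(\CC^{+})\subseteq\{w:|w|>C\}$ for all $n\ge N$.

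I expect the main obstacle to be the uniform (in $w$ and in large $n$) lower bound on the integral for small $w$. The conceptual step that makes it tractable is recognizing that the weighted representing measures $p_{n}$ are merely dilations toward the origin of a single fixed probability measure $p$; once this is in hand, the concentration $p_{n}(\{|s|\le\eta\})\to 1$ is immediate and the elementary estimate $|w-s|\le |w|+|s|$ closes the argument. The remaining points requiring care are the justification that $\mu_{n}$ inherits infinite divisibility, the identification $\int(1+s^{2})\,d\nu_{n}=1$, and the bookkeeping in the partial-fraction rearrangement that exhibits $p_{n}$ as a dilation of $p$.
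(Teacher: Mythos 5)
Your proof is correct and follows essentially the same route as the paper: both exploit the identity $\Im\varphi_{\mu_{n}}(w)=-\Im(w)\int(1+s^{2})|w-s|^{-2}\,d\nu_{n}(s)$ coming from the free L\'evy--Khintchine representation together with the subordination relation $\Im\bigl(w+\varphi_{\mu_{n}}(w)\bigr)>0$ on $F_{\mu_{n}}(\CC^{+})$, and both use that the representing measure of $\mu_{n}$ concentrates near the origin as $n\to\infty$. Your normalization via the probability measures $p_{n}$ (dilations of the fixed $p$ with $p(\RR)=\int(1+t^{2})\,d\nu=1$) is a cleaner piece of bookkeeping than the paper's choice of a fixed interval $[a,b]$ with $\nu([a,b])>0$, and it yields the sharper conclusion that any $C<1$ works, versus the paper's $C=\sqrt{\nu([a,b])/2}$.
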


\begin{proof}
By Lemma \ref{infitely_divisible} $(2)$, we see that
$$\Im (\varphi_{\mu_{n}}(z)) = \Im (\sqrt{n} \varphi_{\mu}
(\sqrt{n}z)) = \sqrt{n} \Im{\int_{-\infty}^{\infty} \frac{1 +
\sqrt{n}tz}{\sqrt{n}z-t} d\nu (t)} = -\int_{-\infty}^{\infty}
\frac{y(1 + t^{2})}{(x - n^{-1/2}t)^{2} + y^{2}} d\nu(t)$$ where $z
= x + iy$.  Now for $\nu ([a,b]) > 0$, we have the following:
$$-\Im (\varphi_{\mu_{n}}(z)) = \int_{-\infty}^{\infty} \frac{y(1 + t^{2})}{(x -
n^{-1/2}t)^{2} + y^{2}} d\nu(t) \geq \int_{a}^{b} \frac{y(1 +
t^{2})}{(x - n^{-1/2}t)^{2} + y^{2}} d\nu(t) $$ $$\geq \frac{y
(\nu([a,b]))}{\max\{|z-n^{-1/2}|b||^{2}, |z-n^{-1/2}|a||^{2}\}}$$

Now, for $z\in \CC^{+}$, by Lemma \ref{infitely_divisible} $(1)$,
$F_{\mu_{n}}(z) + \varphi_{\mu_{n}}(F_{\mu_{n}}(z)) = z$ which
implies that $\Im(\varphi_{\mu_{n}}(F_{\mu_{n}}(z))) +
\Im(F_{\mu_{n}}(z)) > 0$. Therefore,
$-\Im(\varphi_{\mu_{n}}(F_{\mu_{n}}(z))) < \Im(F_{\mu_{n}}(z))$ and,
setting $F_{\mu_{n}} (z) = x' + iy'$, we have that:
 $$\frac{y'\nu([a,b])}{\max\{|F_{\mu_{n}}(z) - |z-n^{-1/2}|b||^{2},
|F_{\mu_{n}}(z) - |z-n^{-1/2}|b||^{2}\}} \leq y'$$ $\Rightarrow$
$\nu([a,b]) \leq \max\{|F_{\mu_{n}}(z)- n^{-1/2} |b| \ |^{2},
|F_{\mu_{n}}(z)- n^{-1/2} |a| \ |^{2}\}$.  As $n^{-1/2}|a| \ ,
n^{-1/2}|b| \rightarrow 0$, we get that for n big enough, $C =
\sqrt{\nu([a,b]) / 2} \leq |F_{\mu_{n}}(z)|$ , which proves our
lemma.

\end{proof}

We now formulate and prove our main theorem.

\begin{theorem} \label{maintheorem}
Let $\mu$ be an infinitely divisible measure with mean $0$ and
variance $1$.  Then $\frac{d\mu_{n}}{dt}$ converges to the
semicircle law uniformly.
\end{theorem}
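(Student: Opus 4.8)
The plan is to combine Theorem~\ref{theoremA}, which already gives uniform convergence on every compact subset of $(-2,2)$, with a boundary-value analysis of $F_{\mu_n}$ that controls the density on the complementary region $\{|t|\ge 2-\epsilon\}$, where the semicircle density is itself small. By the Stieltjes inversion formula the density may be written as
$$\frac{d\mu_n}{dt}(t)=\frac{-1}{\pi}\Im\!\big(F_{\mu_n}(t+i0)^{-1}\big)=\frac{1}{\pi}\,\frac{\Im w_n}{|w_n|^{2}},\qquad w_n:=F_{\mu_n}(t+i0),$$
where $\Im w_n\ge 0$ by Lemma~\ref{addlemma2} and $|w_n|\ge C$ by Lemma~\ref{lastlemma}. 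Thus $\frac{d\mu_n}{dt}(t)\le \Im w_n/(\pi C^{2})$, and it suffices to show that $\Im w_n$ is uniformly small once $|t|\ge 2-\epsilon$, and that $w_n$ is real (so the density vanishes) once $|t|$ exceeds a bound tending to $2$.

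Next I would exploit infinite divisibility to obtain a boundary formula for the inverse $\gamma_n:=F_{\mu_n}^{-1}$, which by Lemma~\ref{newlemma} equals $z+\sqrt n\,\varphi_\mu(\sqrt n z)$. Feeding the representation of Lemma~\ref{infitely_divisible}(3) into this and using the identity $\frac{1+sz}{z-s}=s+\frac{1+s^{2}}{z-s}$ together with the mean-zero normalization $\alpha+\int s\,d\nu(s)=0$, the terms of order $\sqrt n$ cancel and one is left with
$$\gamma_n(z)=z+\int_{-\infty}^{\infty}\frac{1+s^{2}}{z-s/\sqrt n}\,d\nu(s).$$
Since the variance normalization gives $\int(1+s^{2})\,d\nu(s)=1$, the integral tends to $z^{-1}$, so $\gamma_n(z)\to z+z^{-1}=F_\gamma^{-1}(z)$, now valid up to the real axis away from $z=0$. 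Writing $z=x+iy$ and $h_n(z)=\int\frac{1+s^{2}}{(x-s/\sqrt n)^{2}+y^{2}}\,d\nu(s)$ yields the clean identity $\Im\gamma_n(z)=y\,(1-h_n(z))$, with $h_n(z)\to|z|^{-2}$.

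The estimate then runs as follows. Applying $\gamma_n$ to $w_n$ gives $\gamma_n(w_n)=t\in\RR$, so $\Im\gamma_n(w_n)=0$; hence either $\Im w_n=0$, in which case the density vanishes, or $h_n(w_n)=1$. In the latter case, granting the uniform approximation $h_n(w_n)\approx|w_n|^{-2}$ discussed below, we are forced to have $|w_n|$ close to $1$, whence $t=\Re\gamma_n(w_n)$ is close to $\Re(w_n+w_n^{-1})=2\Re w_n\in[-2,2]$. This immediately rules out large $|t|$ (for $n$ large the density is identically zero once $|t|$ exceeds a bound converging to $2$), and for $2-\epsilon\le|t|\le 2+o(1)$ it forces $|\Re w_n|$ to lie within $O(\epsilon)$ of $1$, so that $\Im w_n=\sqrt{|w_n|^{2}-(\Re w_n)^{2}}\lesssim\sqrt\epsilon$. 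Feeding this back into $\frac{d\mu_n}{dt}(t)\le\Im w_n/(\pi C^{2})$ bounds the density by $O(\sqrt\epsilon)$ uniformly over $|t|\ge 2-\epsilon$, which matches the bound $\frac{1}{2\pi}\sqrt{4-t^{2}}=O(\sqrt\epsilon)$ on the semicircle side. Combining this with Theorem~\ref{theoremA} on $[-2+\epsilon,2-\epsilon]$ and letting $\epsilon\downarrow 0$ gives uniform convergence on all of $\RR$.

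The main obstacle is making the approximation $h_n(z)\approx|z|^{-2}$ (equivalently $\gamma_n\to z+z^{-1}$) genuinely \emph{uniform up to the real boundary} and over the tails, since Lemma~\ref{newlemma} only supplies uniformity on cones $\Gamma_{\alpha,\beta}$ bounded away from $\RR$. The difficulty is that the Lévy measure $\nu$ need not have bounded support, so the integrand $\frac{1+s^{2}}{(x-s/\sqrt n)^{2}+y^{2}}$ can develop a near-singularity when $y$ is small and $s/\sqrt n\approx x$, i.e. when $s\approx x\sqrt n$ is large. Controlling this requires using the finiteness of $\int(1+s^{2})\,d\nu(s)$ to render the tail contribution of $\nu$ negligible uniformly in $n$, and it is precisely here that Lemma~\ref{lastlemma} does double duty: it both furnishes the denominator bound $|w_n|\ge C$ and, through the inequality $-\Im\varphi_{\mu_n}(F_{\mu_n}(z))<\Im F_{\mu_n}(z)$ used in its proof, keeps the relevant boundary points $w_n$ bounded away from the origin, where the representation would otherwise degenerate.
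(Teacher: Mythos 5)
Your proposal rests on the same mechanism as the paper's proof: Stieltjes inversion plus Lemma \ref{lastlemma} give $\frac{d\mu_n}{dt}(t) \le \Im w_n/(\pi C^2)$ for $w_n = F_{\mu_n}(t+ih)$, and the subordination identity $\gamma_n(w_n)=t+ih$ together with $\gamma_n(z)\to z+z^{-1}$ forces $t$ into a shrinking neighborhood of $[-2,2]$ whenever $\Im w_n$ is not small. The difference is one of execution. The paper argues by contradiction: if $d\mu_{n_k}/dt(t_k)>\eta$ with $\liminf|t_k|\ge 2$, then inversion and Lemma \ref{lastlemma} give the \emph{lower} bound $\Im F_{\mu_{n_k}}(t_k+ih)\ge \eta\pi C^2$, while $F+\varphi(F)=z$ and $|\varphi(w)-a|\le 2/\Im w$ give an upper bound $\Im F\le M$; hence $F_{\mu_{n_k}}(t_k+ih)$ is trapped in a truncated disk that is a compact subset of $\CC^+$, where Lemma \ref{newlemma} applies and no boundary behaviour of $\varphi_{\mu_n}$ is ever needed. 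You instead work at the boundary with the explicit formula $\gamma_n(z)=z+\int(1+s^2)(z-s/\sqrt{n})^{-1}\,d\nu(s)$, which is a legitimate and more quantitative route.

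The soft spot is exactly the step you flag, ``granting the uniform approximation $h_n(w_n)\approx|w_n|^{-2}$,'' and your stated resolution does not close it. When $\Im z$ is small and $\nu$ has unbounded support, the contribution from $s\approx x\sqrt n$ can be of order $(\Im z)^{-2}$ times the tail mass, and neither $|w_n|\ge C$ nor the finiteness of $\int(1+s^2)\,d\nu$ alone repairs this. What repairs it is a dichotomy you already have implicitly: fix $c>0$; if $\Im w_n<c$ the density at that $t$ is $<c/(\pi C^2)$ and you are done, while if $\Im w_n\ge c$ the tail of $\nu$ beyond $R$ contributes at most $c^{-2}\int_{|s|>R}(1+s^2)\,d\nu$, so $h_n(w_n)\to|w_n|^{-2}$ uniformly and your $|w_n|\approx 1$, $t\approx 2\Re w_n$ argument runs. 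This dichotomy must be made explicit and the constants chased; it is precisely what the paper's contradiction hypothesis (density $>\eta$ forces $\Im F\ge\eta\pi C^2$) automates. Separately, your parenthetical claim that the density is \emph{identically zero} once $|t|$ exceeds a bound tending to $2$ is false for unbounded $\mu$ (the support of $\mu_n$ is unbounded whenever that of $\mu$ is); fortunately you only need, and the dichotomy only delivers, that the density is uniformly small there, which suffices since the semicircle density is also $O(\sqrt\epsilon)$ on $\{|t|\ge 2-\epsilon\}$.
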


\begin{proof}
We already know from Theorem \ref{theoremA} that $\lim_{n
\rightarrow \infty} d\mu_{n} (x) = (2\pi)^{-1} \sqrt {4 -x^{2}}$
uniformly on compact subintervals of $(-2,2)$.  Assuming, for the
sake of contradiction, that we do not have uniform convergence of
the density, we get a sequence of real numbers, $t_{k}$ with the
following properties:

\begin{enumerate}
\item
$\liminf_{k \rightarrow \infty} |t_{k}| \geq 2$
\item
There exists an $\eta > 0$ so that $\frac{d\mu_{n_{k}}}{dt} (t_{k})
> \eta$ for a sequence of natural numbers, $n_{k} \uparrow \infty$.

\end{enumerate}

By Stieltjes inversion formula, $\exists \ h_{k} > 0 \ \ s.t. \
\forall \ h \in (0,h_{k})$ we have the following:
$$\frac{\Im{({F_{\mu_{n_{k}}}(t_{n_{k}} + ih)})}}{|F_{\mu_{n_{k}}}(t_{n_{k}} + ih)|^{2}} = -\Im{(G_{\mu_{n_{k}}}(t_{n_{k}} + ih))} > \pi \eta$$
which, coupled with Lemma \ref{lastlemma}, implies
\begin{equation}\label{I}
 \eta \pi C^{2}
\leq \eta \pi |F_{\mu_{n_{k}}} (t_{k} + ih) |^{2} <
\Im{({F_{\mu_{n_{k}}}(t_{k} + ih)})}.
\end{equation}

Recall that Lemma \ref{infitely_divisible} implies that
$F_{\mu_{n_{k}}}(t_{k} + ih) +
\varphi_{\mu_{n_{k}}}(F_{\mu_{n_{k}}}(t_{k} + ih)) = t_{k} + ih$.
This tells us that  $\ \Im{(F_{\mu_{n_{k}}}(t_{k} + ih))} +
\Im{(\varphi_{\mu_{n_{k}}}(F_{\mu_{n_{k}}}(t_{k} + ih)))} = h$. Now,
with $a \in \RR$ as in Lemma \ref{infitely_divisible} $(3)$, we have
that
$$\Im{(\varphi_{\mu_{n_{k}}}(F_{\mu_{n_{k}}}(t_{k} + ih)))} \leq
| \varphi_{\mu_{n_{k}}}(F_{\mu_{n_{k}}}(t_{k} + ih)) - a| \leq
\frac{2}{\Im{(F_{\mu_{n_{k}}}(t_{k} + ih))}}.$$  Furthermore, Lemma
\ref{addlemma2} tells us that $\Im{(F_{\mu_{n_{k}})}(t_{k} + ih)} >
0$ and we have the following:

$$ h  = \Im{(F_{\mu_{n_{k}}}(t_{k} + ih))} +
\Im{(\varphi_{\mu_{n_{k}}}(F_{\mu_{n_{k}}}(t_{k} + ih)))}  \geq
\Im{(F_{\mu_{n_{k}}}(t_{k} + ih))} - |
\varphi_{\mu_{n_{k}}}(F_{\mu_{n_{k}}}(t_{k} + ih))| $$ $$ \geq
\Im{(F_{\mu_{n_{k}}}(t_{k} + ih))} -
\frac{2}{\Im{(F_{\mu_{n_{k}}}(t_{k} + ih))}}.$$ This implies that
\begin{equation}\label{II}
\Im{(F_{\mu_{n_{k}}}(t_{k} + ih))} \leq M
\end{equation}
where M is a constant and $h$ is sufficiently small, independent of
$t_{k}$.

Thus, by (\ref{I}) and (\ref{II}) , ${(F_{\mu_{n_{k}}}(t_{k} +
ih))}$ lies entirely in the truncated disk, $\Omega = \{z: | z | <
K,  \ \Im{(z)} > \eta \pi C^{2} \}$.  By infinite divisibility,
$\varphi_{\mu_{n_{k}}}$ is defined on $F_{\mu_{n_{k}}} (\CC^{+})
\cap \Omega$ and, by Lemma \ref{newlemma}, converges to $z^{-1}$
uniformly on this set.  However, $\{ z + z^{-1} : z\in \Omega \}$
contains no neighborhood of $\RR$ outside of $[-2 + \delta , 2 -
\delta]$ for some fixed $\delta > 0$.  The same must also hold for
$z + \varphi_{\mu_{n_{k}}} (z)$ for $n_{k}$ big enough. Therefore,
$t_{k} + ih = F_{\mu_{n_{k}}}(t_{k} + ih) +
\varphi_{\mu_{n_{k}}}(F_{\mu_{n_{k}}}(t_{k} + ih))$ must be
contained in a neighborhood of $[-2 + \delta, 2 - \delta]$ in $\CC$
of as small a size as we wish for h small enough and $k$ big enough.
But this contradicts the fact the $t_{k}$ must eventually leave $[-2
+ \frac{\delta}{2}, 2 - \frac{\delta}{2}]$. Thus, our theorem holds.

\end{proof}

It would be interesting to see whether Lemma $\ref{lastlemma}$ or
Theorem \ref{maintheorem} holds without the assumption that $\mu$ is
infinitely divisible.

\bibliographystyle{abbrv}
\bibliography{firstpaperref}

\end{document}